\newcommand{\tr}{^{\sf T}}
\newcommand{\m}[1]{{\bf{#1}}}
\newcommand{\g}[1]{\bm #1}
\newcommand{\C}[1]{{\cal {#1}}}
\title{
Convergence rate for a Gauss collocation method
applied to unconstrained optimal control 
\thanks{
June 1, 2015, revised August 14, 2015
The authors gratefully acknowledge support by
the Office of Naval Research under grants N00014-11-1-0068 and
N00014-15-1-2048, and by the National Science Foundation under
grants DMS-1522629 and CBET-1404767.}
}
\author{
William W. Hager\thanks{{\tt hager@ufl.edu},
http://people.clas.ufl.edu/hager/,
PO Box 118105,
Department of Mathematics,
University of Florida, Gainesville, FL 32611-8105.
Phone (352) 294-2308. Fax (352) 392-8357.}
\and
Hongyan Hou\thanks{{\tt hongyan388@gmail.com},
        Chemical Engineering,
        Carnegie Mellon University,
        5000 Forbes Avenue, Pittsburgh, PA 15213.}
\and
Anil V. Rao\thanks{{\tt anilvrao@ufl.edu},
        http://www.mae.ufl.edu/rao
        Department of Mechanical and Aerospace Engineering,
        P.O. Box 116250, Gainesville, FL 32611-6250.
        Phone:(352) 392-0961. Fax:(352) 392-7303}
}
\begin{document}
\maketitle

\begin{abstract}
A local convergence rate is established for an orthogonal collocation
method based on Gauss quadrature applied to an unconstrained optimal
control problem.
If the continuous problem has a sufficiently smooth solution
and the Hamiltonian satisfies a strong convexity condition,
then the discrete problem possesses a local minimizer
in a neighborhood of the continuous solution, and as the number
of collocation points increases, the discrete solution convergences
exponentially fast in the sup-norm to the continuous solution.
This is the first convergence rate result for an orthogonal collocation
method based on global polynomials applied to an optimal control problem.
\end{abstract}

\begin{keywords}
Gauss collocation method, convergence rate, optimal control,
orthogonal collocation
\end{keywords}

\pagestyle{myheadings} \thispagestyle{plain}
\markboth{W. W. HAGER, H. HOU, AND A. V. RAO}
{GAUSS COLLOCATION}

\section{Introduction}
A convergence rate is established for an orthogonal collocation method
applied to an unconstrained control problem of the form
\begin{equation}\label{P}
\begin{array}{clc}
\mbox {minimize} &C(\m{x}(1))&\\
\mbox {subject to} &\dot{\m{x}}(t)=
\m{f(x}(t), \m{u}(t)),&t\in[-1,1],\\
&\m{x}(-1)=\m{x}_0,&
\end{array}
\end{equation}
where the state ${\m x}(t)\in \mathbb{R}^n$, 
$\dot{\m x}\equiv\displaystyle\frac{d}{dt}{\m x}$,
the control ${\m u}(t)\in {\mathbb R}^m$, 
${\m f}: {\mathbb R}^n \times {\mathbb R}^m\rightarrow {\mathbb R}^n$, 
$C: {\mathbb R}^n \rightarrow {\mathbb R}$,
and ${\m x}_0$ is the initial condition, which we assume is given.
Assuming the dynamics $\dot{\m{x}}(t)= \m{f(x}(t), \m{u}(t))$ is nice enough,
we can solve for the state $\m{x}$ as a function of the control $\m{u}$,
and the control problem reduces to an unconstrained minimization over $\m{u}$.

Let $\C{P}_N$ denote the space of polynomials of degree at most $N$
defined on the interval $[-1, +1]$, and let $\C{P}_N^n$ denote the
$n$-fold Cartesian product $\C{P}_N \times \ldots \times \C{P}_N$.
We analyze a discrete approximation to (\ref{P}) of the form
\begin{equation}\label{D}
\begin{array}{cl}
\mbox {minimize} &C(\m{x}(1))\\
\mbox {subject to} &\dot{\m{x}}(\tau_i)=
\m{f(x}(\tau_i), \m{u}_i),\quad 1 \le i \le N,\\
&\m{x}(-1)=\m{x}_0, \quad \m{x} \in \C{P}_N^n.
\end{array}
\end{equation}
The collocation points $\tau_i$, $1 \le i \le N$, are where the
equation should be satisfied, and $\m{u}_i$ is the control approximation
at time $\tau_i$.
The dimension of $\C{P}_N$ is $N+1$, while there are
$N+1$ equations in (\ref{D}) corresponding to the
collocated dynamics at $N$ points and the initial condition.
When the discrete dynamics is nice enough,
we can solve for the discrete state $\m{x} \in \C{P}_N^n$ as a function
of the discrete controls $\m{u}_i$, $1 \le i \le N$, and the discrete
approximation reduces to an unconstrained minimization over the discrete
controls.

We analyze the method developed in \cite{Benson2,GargHagerRao10a}
where the collocation points are the Gauss quadrature
abscissas, or equivalently, the roots of a Legendre polynomial.
Other sets of collocation points that have been studied include
the Lobatto quadrature points \cite{Elnagar1,Fahroo2},
the Chebyshev quadrature points \cite{Elnagar4, FahrooRoss02},
the Radau quadrature points
\cite{Fahroo3,GargHagerRao11a,LiuHagerRao15, PattersonHagerRao14},
and extrema of Jacobi polynomials \cite{Williams1}.
The Gauss quadrature points that we analyze are symmetric about
$t = 0$ and satisfy
\[
-1 < \tau_1 < \tau_2 < \ldots < \tau_N < +1 .
\]
In addition, we employ two noncollocated points $\tau_0 = -1$ and
$\tau_{N+1} = +1$.

Our goal is to show that if $(\m{x}^*, \m{u}^*)$ is a local minimizer
for (\ref{P}), then the discrete problem (\ref{D}) has a local minimizer
$(\m{x}^N, \m{u}^N)$ that converges exponentially fast in $N$ to
$(\m{x}^*, \m{u}^*)$ at the collocation points.
This is the first convergence rate result for an orthogonal collocation
method based on global polynomials applied to an optimal control problem.
A consistency result for a scheme based on global polynomials and
Lobatto collocation is given in \cite{GongRossKangFahroo08}.
Convergence rates have been obtained previously when the approximating
space consists of piecewise polynomials as in
\cite{DontchevHager93, DontchevHager97,DontchevHagerVeliov00,
DontchevHagerMalanowski00, Hager99c, Kameswaran1, Reddien79}.
In these results,
convergence is achieved by letting the mesh spacing tend to zero.
In our results, on the other hand, convergence is achieved by letting
$N$, the degree of the approximating polynomials, tend to infinity.

To state our convergence results in a precise way,
we need to introduce a function space setting.
Let $\C{C}^k (\mathbb{R}^n)$ denote the space of $k$ times
continuously differentiable functions
$\m{x}: [-1, +1] \rightarrow \mathbb{R}^n$ with the sup-norm
$\| \cdot \|_\infty$ given by
\begin{equation}\label{csup}
\|\m{x}\|_\infty = \sup \{ |\m{x} (t)| : t \in [-1, +1] \} ,
\end{equation}
where $| \cdot |$ is the Euclidean norm.
It is assumed that (\ref{P}) has a local minimizer
$(\m{x}^*, \m{u}^*)$ in $\C{C}^1 (\mathbb{R}^n) \times \C{C}^0 (\mathbb{R}^m)$.
Given $\m{y} \in \mathbb{R}^n$, the ball with center $\m{y}$ and radius
$\rho$ is denoted
\[
\C{B}_\rho (\m{y}) = \{ \m{x} \in \mathbb{R}^n :
|\m{x} - \m{y}| \le \rho \} .
\]
It is assumed that there exists an open set
$\Omega \subset \mathbb{R}^{m+n}$ and $\rho > 0$ such that
\[
\C{B}_\rho (\m{x}^*(t),\m{u}^*(t)) \subset \Omega \mbox{ for all }
t \in [-1, +1].
\]
Moreover, the first two derivative of $f$ and $C$ are
continuous on the closure of
$\Omega$ and on $\C{B}_\rho (\m{x}^*(1))$ respectively.

Let $\g{\lambda}^*$ denote the solution of the linear costate equation
\begin{equation}\label{costate}
\dot{\g{\lambda}}^*(t)=-\nabla_xH({\m x}^*(t), {\m u}^*(t), {\g \lambda}^*(t)),
\quad {\g \lambda}^*(1)=\nabla C({\m x}^*(1)),
\end{equation}
where $H$ is the Hamiltonian defined by 
$H({\m x}, {\m u}, {\g \lambda}) ={\g\lambda}\tr {\m f}({\m x}, {\m u})$.
Here $\nabla C$ denotes the gradient of $C$.
By the first-order optimality conditions (Pontryagin's minimum principle),
we have
\begin{equation} \label{controlmin}
\nabla_u H({\m x}^*(t), {\m u}^*(t), {\g \lambda}^*(t)) = \m{0}
\end{equation}
for all $t \in [-1, +1]$.

Since the discrete collocation problem (\ref{D}) is finite dimensional,
the first-order optimality conditions (Karush-Kuhn-Tucker conditions)
imply that when a constraint qualification holds
\cite{NocedalWright2006}, the gradient of the Lagrangian vanishes.
By the analysis in \cite{GargHagerRao10a}, the gradient of the
Lagrangian vanishes if and only if there exists
$\g{\lambda} \in \C{P}_N^n$ such that
\begin{eqnarray}
\dot{\g \lambda}(\tau_i) &=&
-\nabla_x H\left({\m x} (\tau_i),{\m u}_i, {\g \lambda} (\tau_i) \right), 
\quad 1 \leq i \leq N,  \label{dcostate} \\
{\g \lambda}(+1) &=& \nabla C(\m{x}(+1)), \label{dterminal}\\
\m{0} &=&
\nabla_u H\left({\m x}(\tau_i),{\m u}_i, {\g \lambda} (\tau_i) \right), 
\quad 1\leq i\leq N. \label{dcontrolmin}
\end{eqnarray}

The assumptions that play a key role in the convergence
analysis are the following:

\begin{itemize}
\item[(A1)]
$\m{x}^*$ and $\g{\lambda}^* \in \C{C}^{\eta+1}$ for some $\eta \ge 3$.
\item[(A2)]
For some $\alpha > 0$,
the smallest eigenvalue of the Hessian matrices
\[
\nabla^2 C(\m{x}^*(1)) \quad \mbox{and} \quad
\nabla^2_{(x,u)} H(\m{x}^* (t),
\m{u}^* (t), \g{\lambda}^* (t) )
\]
is greater than $\alpha$, uniformly for $t \in [-1, +1]$.
\item[(A3)]
The Jacobian of the dynamics satisfies
\[
\|\nabla_x \m{f} (\m{x}^*(t), \m{u}^*(t))\|_\infty \le 1/4
\quad \mbox{and} \quad
\|\nabla_x \m{f} (\m{x}^*(t), \m{u}^*(t))\tr\|_\infty \le 1/4
\]
for all $t \in [-1, +1]$ where $\| \cdot \|_\infty$ is the matrix
sup-norm (largest absolute row sum), and the Jacobian
$\nabla_x \m{f}$ is an $n$ by $n$  matrix whose $i$-th row is
$(\nabla_x f_i)\tr$.
\end{itemize}
\smallskip

The smoothness assumption (A1) is used to obtain a bound for the
accuracy with which the interpolant of the continuous state $\m{x}^*$
satisfies the discrete dynamics.
The coercivity assumption (A2) ensures that the solution of the
discrete problem is a local minimizer.
The condition (A3) does not appear in convergence analysis for
(local) piecewise polynomial techniques
\cite{DontchevHager93, DontchevHager97,DontchevHagerVeliov00,
DontchevHagerMalanowski00, Hager99c, Kameswaran1, Reddien79}.
It arises when we approximate a solution by polynomials defined
on the entire interval $[-1, +1]$.
More precisely, in the analysis, the dynamics is linearized around
$(\m{x}^*, \m{u}^*)$, and
(A3) implies that when we perturb the linearized dynamics,
the state perturbation is bounded uniformly in $N$ with respect
to the perturbation in the dynamics.
If the domain $[-1, +1]$ is partitioned into uniform subdomains
of width $h$ and a different polynomial is used on each subdomain,
then (A3) is replaced by
\[
\|\nabla_x \m{f} (\m{x}^*(t), \m{u}^*(t))\|_\infty \le 1/(2h)
\quad \mbox{and} \quad
\|\nabla_x \m{f} (\m{x}^*(t), \m{u}^*(t))\tr\|_\infty \le 1/(2h)
\]
which is satisfied when $h$ is sufficiently small.
In general, (A3) could be replaced by any condition that ensures
stability of the linearized dynamics.

In addition to the 3 assumptions, the analysis employs 2 properties
of the Gauss collocation scheme.
Let $\omega_j$, $1 \le j \le N$, denote the Gauss quadrature weights, and
for $1 \le i \le N$ and $0 \le j \le N$, define
\begin{equation}\label{Ddef}
D_{ij} = \dot{L}_j (\tau_i), \quad \mbox{where }
L_j (\tau) := \prod^N_{\substack{i=0\\i\neq j}}
\frac{\tau-\tau_i}{\tau_j-\tau_i}.
\end{equation}
$\m{D}$ is a differentiation matrix in the sense that
$(\m{Dp})_i = \dot{p} (\tau_i)$, $1 \le i \le N$,
where $p \in \C{P}_N$ is the polynomial that satisfies
$p(\tau_j) = p_j$ for $0 \le j \le N$.
The submatrix $\m{D}_{1:N}$ consisting of the tailing $N$ columns of $\m{D}$,
has the following properties:
\smallskip
\begin{itemize}
\item [(P1)]
$\m{D}_{1:N}$ is invertible and
$\| \m{D}_{1:N}^{-1}\|_\infty \le 2$.
\item [(P2)]
If $\m{W}$ is the diagonal matrix containing the quadrature weights $\g{\omega}$
on the diagonal, then the rows of the
matrix $[\m{W}^{1/2} \m{D}_{1:N}]^{-1}$ have Euclidean norm bounded by
$\sqrt{2}$.
\end{itemize}
\smallskip
The fact that $\m{D}_{1:N}$ is invertible is established in
\cite[Prop. 1]{GargHagerRao10a}.
The bounds on the norms in (P1) and (P2), however, are more subtle.
We refer to (P1) and (P2) as properties rather than assumptions since
the matrices are readily evaluated, and we can check numerically that
(P1) and (P2) are always satisfied.
In fact, numerically we find that
$\| \m{D}_{1:N}^{-1}\|_\infty = 1 + \tau_N$ where the last
Gauss quadrature abscissa $\tau_N$ approaches $+1$ as $N$ tends to $\infty$.
On the other hand, we do not yet have a general proof for the properties
(P1) and (P2).
In contrast, conditions (A1)--(A3) are assumptions that are only satisfied
by certain control problems.

If $\m{x}^N \in \C{P}_N^n$ is a solution of (\ref{D}) associated
with the discrete controls $\m{u}_i$, $1 \le i \le N$, and
if $\g{\lambda}^N \in \C{P}_N^n$ satisfies
(\ref{dcostate})--(\ref{dcontrolmin}), then we define
\[
\begin{array}{llllllll}
\m{X}^N &= [ &\m{x}^N(-1), & \m{x}^N(\tau_1), & \ldots,
& \m{x}^N(\tau_N), & \m{x}^N(+1) &], \\
\m{X}^* &= [ &\m{x}^*(-1), & \m{x}^*(\tau_1), & \ldots,
& \m{x}^*(\tau_N), & \m{x}^*(+1) &], \\
\m{U}^N &= [ && \m{u}_1, & \ldots, & \m{u}_N\; & &], \\
\m{U}^* &= [ && \m{u}^*(\tau_1), & \ldots, & \m{u}^*(\tau_N)& & ], \\
\g{\Lambda}^N &= [ &\g{\lambda}^N(-1), & \g{\lambda}^N(\tau_1),
& \ldots, & \g{\lambda}^N(\tau_N), & \g{\lambda}^N(+1) &], \\
\g{\Lambda}^* &= [ &\g{\lambda}^*(-1), & \g{\lambda}^*(\tau_1),
& \ldots, & \g{\lambda}^*(\tau_N), & \g{\lambda}^*(+1) &].
\end{array}
\]
For any of the discrete variables, we define a discrete
sup-norm analogous to the continuous sup-norm in (\ref{csup}).
For example, if $\m{U}^N \in \mathbb{R}^{mN}$ with $\m{U}_i \in \mathbb{R}^m$,
then
\[
\|\m{U}^N\|_\infty = \sup \{ |\m{U}_i| : 1 \le i \le N \}.
\]
The following convergence result is established:
\smallskip
\begin{theorem}\label{maintheorem}
If $(\m{x}^*, \m{u}^*)$ is a local minimizer for the continuous problem
$(\ref{P})$ and both {\rm (A1)--(A3)} and {\rm (P1)--(P2)} hold,
then for $N$ sufficiently large with $N > \eta+1$,
the discrete problem $(\ref{D})$ has a
local minimizer $(\m{X}^N, \m{U}^N)$ and an associated discrete costate
$\g{\Lambda}^N$ for which
\begin{equation}\label{maineq}
\max \left\{ \|{\bf X}^N-{\bf X}^*\|_\infty ,
\|{\bf U}^N-{\bf U}^*\|_\infty,
\|{\g \Lambda}^N-{\g \Lambda}^*\|_\infty \right\} \leq cN^{2-\eta},
\end{equation}
where $c$ is independent of $N$.
\end{theorem}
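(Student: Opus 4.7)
The plan is to reformulate the discrete first-order conditions (\ref{dcostate})--(\ref{dcontrolmin}) together with the collocated dynamics as a nonlinear system $\C{F}_N(\m{Z}) = \m{0}$ on the finite-dimensional space of discrete triples $\m{Z}=(\m{X},\m{U},\g{\Lambda})$, equipped with the sup-norm. Around the ``reference point'' $\m{Z}^*=(\m{X}^*,\m{U}^*,\g{\Lambda}^*)$ formed from the continuous optimum, I would establish (i) consistency $\|\C{F}_N(\m{Z}^*)\|_\infty \le c\, N^{2-\eta}$, and (ii) uniform stability $\|\C{F}_N'(\m{Z}^*)^{-1}\|_\infty \le c$ for $N$ large. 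A Newton-Kantorovich / inverse function argument then yields a nearby discrete solution $\m{Z}^N$ satisfying (\ref{maineq}). This is the Brezzi-Rappaz-Raviart strategy used in \cite{DontchevHager93,Hager99c} for piecewise polynomial discretizations; the novelty here is handling a global polynomial space.

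For consistency, the key observation is that the unique polynomial $\tilde{\m{x}} \in \C{P}_N^n$ with $\tilde{\m{x}}(\tau_j) = \m{x}^*(\tau_j)$ for $0 \le j \le N$ satisfies $\dot{\tilde{\m{x}}}(\tau_i) - \m{f}(\m{x}^*(\tau_i),\m{u}^*(\tau_i))$ equal to the derivative of the Gauss interpolation error evaluated at $\tau_i$. Under (A1) and classical Jackson-type estimates for Lagrange interpolation at Gauss nodes, this derivative error is bounded by $c\, N^{2-\eta}$ in the sup-norm, which supplies the rate on the right-hand side of (\ref{maineq}). An identical argument applied to $\g{\lambda}^*$ controls the residual in the discrete costate equation (\ref{dcostate}), while (\ref{controlmin}) holding pointwise gives zero residual in (\ref{dcontrolmin}), and the boundary conditions (\ref{dterminal}) and $\m{x}(-1)=\m{x}_0$ are exact for the interpolant.

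The main obstacle is establishing uniform-in-$N$ stability of $\C{F}_N'(\m{Z}^*)$. I would linearize $\C{F}_N$ at $\m{Z}^*$, producing a coupled discrete two-point boundary value problem for the state and costate together with a stationarity equation for the control. Using (A2), the block $\nabla_{uu}^2 H$ is invertible with a uniform bound, so the control can be eliminated, leaving a reduced linear system in the state-costate pair. For the state block, (P1) combined with (A3) lets me invert $\m{D}_{1:N}\delta\m{X} - \m{A}\delta\m{X}$ uniformly in sup-norm by a Neumann series, since $\|\m{D}_{1:N}^{-1}\|_\infty \cdot \|\nabla_x \m{f}\|_\infty \le 2 \cdot \tfrac{1}{4} = \tfrac{1}{2}$; a transposed argument with the second half of (A3) controls the costate block. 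To close the reduction against the eliminated control, I would combine (A2)'s coercivity with (P2): the strong convexity gives a positive lower bound for the quadratic form in the discrete $\ell^2$ weighted by the Gauss quadrature weights, and the $\sqrt{2}$ Euclidean row bound on $[\m{W}^{1/2}\m{D}_{1:N}]^{-1}$ is exactly what converts that weighted $\ell^2$ coercivity into the sup-norm estimate required by the fixed-point iteration.

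Having solved the nonlinear system by the contraction argument and obtained the error estimate by combining the consistency bound $c\,N^{2-\eta}$ with the uniform stability constant, I would finally verify that the discrete critical point is a local minimizer. The discrete reduced Hessian (the second derivative of the control-to-cost map composed with $\nabla^2 H$ and $\nabla^2 C$) inherits positive definiteness from (A2) because it is a small perturbation of the corresponding continuous operator, with perturbation controlled by the same stability bounds used above. This certifies that $\m{Z}^N$ yields a local minimizer of (\ref{D}) and completes the proof of Theorem~\ref{maintheorem}.
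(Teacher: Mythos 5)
Your proposal is correct in outline and follows the same skeleton as the paper's proof: recast the discrete KKT system together with the collocated dynamics as a square nonlinear system $\C{T}(\m{X},\m{U},\g{\Lambda})=\m{0}$, bound the residual at the interpolant of the continuous solution by $cN^{2-\eta}$, establish uniform-in-$N$ invertibility of the linearization, and conclude with a Kantorovich-type fixed-point result (the paper invokes Proposition 3.1 of \cite{DontchevHagerVeliov00}) followed by a second-order sufficiency check. Where you genuinely diverge is in the organization of the stability step. The paper never eliminates the control: it exhibits a strongly convex quadratic program whose first-order conditions coincide with $\nabla\C{T}^*[\m{X},\m{U},\g{\Lambda}]=\m{y}$, so invertibility follows from solvability of the QP, and the weighted $\omega$-norm bound on $(\m{X},\m{U})$ drops out of one variational identity tested against the null-space component; the sup-norm upgrade then proceeds exactly as you suggest, via (P1)--(P2) and the factorization $\bar{\m{D}}^{-1}\m{B}\m{U}=[(\m{W}^{1/2}\m{D}_{1:N})^{-1}\otimes\m{I}_n]\m{B}\m{U}_\omega$, with $\m{U}$ recovered \emph{last} from the stationarity equation using the uniform positive definiteness of $\m{R}_i$, rather than eliminated first. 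Your elimination-first plan is viable in principle (for a square finite-dimensional system, an a priori bound on all solutions already yields invertibility), but you gloss over two points the paper must work for. First, your ``transposed argument'' for the costate block is not a transposition: the adjoint differentiation matrix $\m{D}^\dagger$ of (\ref{eq15})--(\ref{eq151}) is neither $-\m{D}$ nor $\m{D}\tr$, and the needed bounds $\|(\m{D}_{1:N}^\dagger)^{-1}\|_\infty\le 2$ and the row bound of (P2) do not follow from (P1)--(P2) by taking transposes (the induced $\infty$-norm is not transpose-invariant); the paper proves in the appendix the exchange-matrix identity $\m{D}_{1:N}=-\m{J}\m{D}_{1:N}^\dagger\m{J}$, which is what transfers (P1)--(P2) to $\m{D}^\dagger$. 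Second, your residual analysis claims the boundary conditions are exact for the interpolant, but the discrete formulation (\ref{nlp}) carries the quadrature equation $\m{X}_{N+1}=\m{X}_0+\sum_{j=1}^N\omega_j\m{f}(\m{X}_j,\m{U}_j)$, whose residual at the continuous solution is nonzero; the paper bounds it using exactness of $N$-point Gauss quadrature for polynomials of degree $\le 2N-1$ together with the interpolation error at $t=1$ (equations (\ref{h3})--(\ref{h6})). Finally, for local optimality the paper's route is more direct than your reduced-Hessian perturbation argument: after the multiplier transformation (\ref{eq12}), the Hessian of the discrete Lagrangian is block diagonal with blocks $\omega_i\nabla^2_{(x,u)}H$ and $\nabla^2 C$, so (A2), continuity of second derivatives, and the convergence estimate (\ref{maineq}) give positive definiteness at the computed solution and the second-order sufficient condition applies immediately.
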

\smallskip

Although the discrete problem only possesses discrete controls at the
collocation points $-1 < \tau_i < +1$, $1 \le i \le N$, an estimate
for the discrete control at $t = -1$ and $t = +1$ is usually obtained
from the minimum principle (\ref{controlmin}) since we do have estimates
for the discrete state and costate at the end points.
Alternatively, polynomial interpolation could be used to obtain
estimates for the control at the end points of the interval.

The paper is organized as follows.
In Section~\ref{abstract} the discrete optimization
problem (\ref{D}) is reformulated as a nonlinear system of equations
obtained from the first-order optimality conditions,
and a general approach to convergence analysis is presented.
Section~\ref{residual} obtains an estimate for how closely the
solution to the continuous problem satisfies the first-order optimality
conditions for the discrete problem.
Section~\ref{inverse} proves that the linearization of the discrete
control problem around a solution of the continuous problem is invertible.
Section~\ref{omegabounds} establishes an $L^2$ stability property
for the linearization, while
Section~\ref{inftybounds} strengthens the norm to $L^\infty$.
This stability property is the basis for the proof of Theorem~\ref{maintheorem}.
A numerical example illustrating the exponential convergence
result is given in Section~\ref{numerical}.

{\bf Notation.}
The meaning of the norm $\| \cdot \|_\infty$ is based on context.
If $\m{x} \in \C{C}^0 (\mathbb{R}^n)$, then
$\|\m{x}\|_\infty$ denotes the maximum of $|\m{x}(t)|$ over
$t \in [-1, +1]$, where $| \cdot|$ is the Euclidean norm.
If $\m{A} \in \mathbb{R}^{m \times n}$, then $\|\m{A}\|_\infty$
is the largest absolute row sum (the matrix norm induces by the
$\ell_\infty$ vector norm).
If $\m{U} \in \mathbb{R}^{mN}$ is the discrete control with
$\m{U}_i \in \mathbb{R}^m$, then $\|\m{U}\|_\infty$ is the maximum
of $|\m{U}_i|$, $1 \le i \le N$.
The dimension of the identity matrix $\m{I}$ is often clear from context;
when necessary, the dimension of $\m{I}$ is specified by a subscript.
For example, $\m{I}_n$ is the $n$ by $n$ identity matrix.
$\nabla C$ denotes the gradient, a column vector, while
$\nabla^2 C$ denotes the Hessian matrix.
Throughout the paper, $c$ denotes a generic constant which has different
values in different equations.
The value of this constant is always independent of $N$.
$\m{1}$ denotes a vector whose entries are all equal to one, while
$\m{0}$ is a vector whose entries are all equal to zero, their
dimension should be clear from context.

\section{Abstract setting}
\label{abstract}
As shown in \cite{GargHagerRao10a}, the discrete problem (\ref{D})
can be reformulated as the nonlinear programming problem
\begin{equation}\label{nlp}
\begin{array}{ll}
\mbox {minimize} &C(\m{X}_{N+1})\\[.05in]
\mbox {subject to} &\sum_{j=0}^N{D}_{ij}{\m X}_j
={\m f}({\m X}_i,{\m U}_i), \quad 1\leq i\leq N, \quad \m{X}_0=\m{x}_0,\\[.05in]
&{\m X}_{N+1}={\m X}_0+\sum_{j=1}^N\omega_j{\m f}({\m X}_j,{\m U}_j).
\end{array}
\end{equation}
As indicated before Theorem~\ref{maintheorem}, $\m{X}_i$ corresponds to
$\m{x}^N(\tau_i)$.
Also, \cite{GargHagerRao10a} shows that the equations obtained by
setting the gradient of the Lagrangian to zero are equivalent
to the system of equations
\begin{eqnarray}
\sum_{j=1}^{N+1}{D}_{ij}^\dag{\g \Lambda}_j &=&
-\nabla_x H\left({\m X}_i,{\m U}_i, {\g \Lambda}_i\right), 
\quad 1 \leq i \leq N,  \quad
{\g \Lambda}_{N+1}=\nabla C(\m{X}_{N+1}), \label{Dadjoint} \\
\m{0} &=&
\nabla_u H\left({\m X}_i,{\m U}_i, {\g \Lambda}_i\right), 
\quad 1\leq i\leq N,  \label{Dcontrolmin}
\end{eqnarray}
where
\begin{eqnarray}
{D}_{ij}^\dag&=&-\displaystyle \left(\frac{\omega_j}{\omega_i}\right) {D}_{ji}, 
 \quad 1\leq i \leq N,\quad 1\leq j \leq N, \label{eq15} \\
{D}_{i,N+1}^\dag&=&-\sum_{j=1}^N{D}_{ij}^\dag, \quad
 1\leq i\leq N. \label{eq151}
\end{eqnarray}
Here $\g{\Lambda}_i$ corresponds to $\g{\lambda}^N(\tau_i)$.
The relationship between the discrete costate $\g{\Lambda}_i$, the
KKT multipliers $\g{\lambda}_i$ associated with the discrete dynamics,
and the multiplier $\g{\lambda}_{N+1}$ associated with the equation
for $\m{X}_{N+1}$ is
\begin{equation}\label{eq12} 
\omega_i {\g \Lambda}_i=\g{\lambda}_i+ \omega_i \g{\lambda}_{N+1}
\quad \mbox{when}\quad 1\leq i \leq N, \quad \mbox{ and} \quad
{\g \Lambda}_{N+1}=\g{\lambda}_{N+1}.
\end{equation}

The first-order optimality conditions for the nonlinear program (\ref{nlp})
consist of the equations (\ref{Dadjoint}) and (\ref{Dcontrolmin}),
and the constraints in (\ref{nlp}).
This system can be written as $\C{T}(\m{X}, \m{U}, \g{\Lambda}) = \m{0}$ where
\[
(\C{T}_1, \C{T}_2, \ldots, \C{T}_5) (\m{X}, \m{U}, \g{\Lambda}) \in
\mathbb{R}^{nN} \times \mathbb{R}^n \times
\mathbb{R}^{nN} \times \mathbb{R}^n \times \mathbb{R}^{mN} .
\]
The 5 components of $\C{T}$ are defined as follows:
\begin{eqnarray*}
\C{T}_{1i}(\m{X}, \m{U}, \g{\Lambda}) &=&
\left( \sum_{j=0}^N{D}_{ij}{\bf X}_j \right) -{\bf f}({\bf X}_i,{\bf U}_i), 
\quad 1\leq i\leq N ,\\
\C{T}_2(\m{X}, \m{U}, \g{\Lambda}) &=&
{\bf X}_{N+1}-{\bf X}_0-\sum_{j=1}^N\omega_j{\bf f}({\bf X}_j,{\bf U}_j),
\\
\C{T}_{3i}(\m{X}, \m{U}, \g{\Lambda}) &=&
\left( \sum_{j=1}^{N+1}{D}_{ij}^\dag{\g \Lambda}_j \right) +
\nabla_x H({\bf X}_i,{\bf U}_i, {\g \Lambda}_i),
\quad 1 \leq i \leq N , \\
\C{T}_4(\m{X}, \m{U}, \g{\Lambda}) &=&
{\g \Lambda}_{N+1}-\nabla_x C(\mathbf{X}_{N+1}), \\[.05in]
\C{T}_{5i}(\m{X}, \m{U}, \g{\Lambda}) &=&
\nabla_u H({\bf X}_i, {\bf U}_i, {\g \Lambda}_i),
\quad 1\leq i\leq N .
\end{eqnarray*}

Note that in formulating $\C{T}$, we treat $\m{X}_0$ as a constant
whose value is the given starting condition $\m{x}_0$.
Alternatively, we could treat $\m{X}_0$ as an unknown and then
expand $\C{T}$ to have a 6-th component $\m{X}_0 - \m{x}_0$.
With this expansion of $\C{T}$, we need to introduce an additional
multiplier $\g{\Lambda}_0$ for the constraint $\m{X}_0 - \m{x}_0$.
To achieve a slight simplification in the analysis, we employ a
5-component $\C{T}$ and treat $\m{X}_0$ as a constant, not an unknown.

The proof of Theorem~\ref{maintheorem} reduces to a study of solutions
to $\C{T}(\m{X}, \m{U}, \g{\Lambda}) = \m{0}$ in a neighborhood of
$(\m{X}^*, \m{U}^*, \g{\Lambda}^*)$.
Our analysis is based on 
\cite[Proposition 3.1]{DontchevHagerVeliov00}, which we simplify
below to take into account the structure of our $\C{T}$.
Other results like this are contained in
Theorem~3.1 of \cite{DontchevHager97},
in Proposition~5.1 of \cite{Hager99c}, and in Theorem~2.1 of \cite{Hager02b}.
\begin{proposition}\label{prop}
Let $\mathcal{X}$ be a Banach space and $\mathcal{Y}$ 
be a linear normed space with the norms in both spaces denoted 
$\|\cdot\|$. 
Let $\mathcal{T}$: $\mathcal{X} \longmapsto \mathcal{Y}$ with 
$\mathcal{T}$ continuously Fr\'{e}chet differentiable in 
$\C{B}_r(\g{\theta}^*)$ for
some $\g{\theta}^* \in \mathcal{X}$ and $r> 0$. 
Suppose that
\[
\|\nabla\mathcal{T}(\g{\theta})-\nabla\mathcal{T}(\g{\theta}^*)\|
\leq \varepsilon \mbox{ for all } \g{\theta} \in
{\mathcal B}_r(\g{\theta}^*)
\]
where $\nabla\mathcal{T}(\g{\theta}^*)$ is invertible, and
define $\mu := \|\nabla\mathcal{T}(\g{\theta}^*)^{-1}\|$.
If $\varepsilon\mu < 1$ and 
$\left\|\mathcal{T}\left(\g{\theta}^*\right)\right\|
\leq(1-\mu\varepsilon)r/\mu$, 
then there exists a unique 
$\g{\theta} \in {\mathcal B}_r(\g{\theta}^*)$ such that 
$\mathcal{T}(\g{\theta})=\m{0}$. 
Moreover, we have the estimate
\begin{equation}\label{abs}
\|\g{\theta}-\g{\theta}^*\|\leq \frac{\mu}{1-\mu\varepsilon}
\left\|\mathcal{T}\left(\g{\theta}^*\right)\right\| \le r.
\end{equation}
\end{proposition}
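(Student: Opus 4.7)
The plan is to recognize this as a quantitative inverse function theorem of Newton--Kantorovich type and prove it by a Banach fixed point argument. I would introduce the Newton-like map
\[
\Phi(\g{\theta}) := \g{\theta} - \nabla\C{T}(\g{\theta}^*)^{-1}\,\C{T}(\g{\theta}),
\]
defined on the closed ball $\C{B}_r(\g{\theta}^*) \subset \C{X}$. Since $\nabla\C{T}(\g{\theta}^*)$ is invertible, fixed points of $\Phi$ in the ball are exactly zeros of $\C{T}$ in the ball. Because $\C{X}$ is a Banach space, the closed ball is a complete metric space in its own right, so it suffices to verify that $\Phi$ is a contraction that maps $\C{B}_r(\g{\theta}^*)$ into itself. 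Note that the target space $\C{Y}$ need not be complete; only the domain does, and this is why $\C{X}$ is assumed to be Banach while $\C{Y}$ is only normed.

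For the contraction property, I would use the fundamental theorem of calculus to write
\[
\Phi(\g{\theta}_1)-\Phi(\g{\theta}_2) = \nabla\C{T}(\g{\theta}^*)^{-1}\int_0^1 \left[\nabla\C{T}(\g{\theta}^*)-\nabla\C{T}\bigl(\g{\theta}_2+s(\g{\theta}_1-\g{\theta}_2)\bigr)\right](\g{\theta}_1-\g{\theta}_2)\,ds,
\]
which is valid because the segment between $\g{\theta}_1,\g{\theta}_2 \in \C{B}_r(\g{\theta}^*)$ stays in the ball where $\C{T}$ is continuously Fr\'echet differentiable. Taking norms and applying the hypotheses $\|\nabla\C{T}(\g{\theta})-\nabla\C{T}(\g{\theta}^*)\| \le \varepsilon$ and $\|\nabla\C{T}(\g{\theta}^*)^{-1}\| = \mu$ gives the Lipschitz estimate
\[
\|\Phi(\g{\theta}_1)-\Phi(\g{\theta}_2)\| \le \mu\varepsilon\,\|\g{\theta}_1-\g{\theta}_2\|,
\]
and $\mu\varepsilon<1$ makes this a strict contraction.

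To show $\Phi$ leaves $\C{B}_r(\g{\theta}^*)$ invariant, I would use the triangle inequality together with the contraction estimate centered at $\g{\theta}^*$:
\[
\|\Phi(\g{\theta})-\g{\theta}^*\| \le \|\Phi(\g{\theta})-\Phi(\g{\theta}^*)\| + \|\Phi(\g{\theta}^*)-\g{\theta}^*\| \le \mu\varepsilon\,r + \mu\,\|\C{T}(\g{\theta}^*)\|.
\]
The bound $\|\C{T}(\g{\theta}^*)\| \le (1-\mu\varepsilon)r/\mu$ makes the right-hand side at most $r$, so $\Phi$ maps $\C{B}_r(\g{\theta}^*)$ into itself. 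Banach's fixed point theorem then produces a unique fixed point $\g{\theta} \in \C{B}_r(\g{\theta}^*)$, which is the unique zero of $\C{T}$ in the ball.

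Finally, the quantitative estimate (\ref{abs}) follows by specializing the same triangle inequality to the fixed point: $\|\g{\theta}-\g{\theta}^*\| = \|\Phi(\g{\theta})-\g{\theta}^*\| \le \mu\varepsilon\,\|\g{\theta}-\g{\theta}^*\| + \mu\,\|\C{T}(\g{\theta}^*)\|$, and rearranging gives the desired inequality. Honestly, there is no serious obstacle here; this is a textbook Newton--Kantorovich argument, and the only care required is in correctly propagating the constants so that the ball radius $r$ appears on both sides of the invariance bound and the factor $(1-\mu\varepsilon)$ appears in the denominator of the final error estimate.
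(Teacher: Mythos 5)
Your proof is correct and is essentially the paper's own approach: the paper does not prove Proposition~\ref{prop} but instead defers to \cite[Proposition 3.1]{DontchevHagerVeliov00} (and the related results in \cite{DontchevHager97,Hager99c,Hager02b}), and the proof given there is precisely your Banach fixed-point argument for the simplified Newton map $\Phi(\g{\theta}) = \g{\theta} - \nabla\C{T}(\g{\theta}^*)^{-1}\C{T}(\g{\theta})$ on the closed ball, with the same contraction constant $\mu\varepsilon$, the same invariance computation $\mu\varepsilon r + \mu\|\C{T}(\g{\theta}^*)\| \le r$, and the same rearrangement yielding (\ref{abs}). The one point worth tightening is your fundamental-theorem-of-calculus step: since $\C{Y}$ is only a normed space, the Riemann integral of the $\C{Y}$-valued integrand need not converge in $\C{Y}$ itself, so you should either interpret it in the completion of $\C{Y}$ or replace it by the mean value inequality $\|\C{T}(\g{\theta}_1)-\C{T}(\g{\theta}_2)-\nabla\C{T}(\g{\theta}^*)(\g{\theta}_1-\g{\theta}_2)\| \le \sup_{0\le s\le 1}\|\nabla\C{T}(\g{\theta}_2+s(\g{\theta}_1-\g{\theta}_2))-\nabla\C{T}(\g{\theta}^*)\|\,\|\g{\theta}_1-\g{\theta}_2\|$, which is valid in any normed space and gives the identical bound $\mu\varepsilon\|\g{\theta}_1-\g{\theta}_2\|$.
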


We apply Proposition~\ref{prop} with
$\g{\theta}^* = (\m{X}^*, \m{U}^*, \g{\Lambda}^*)$ and
$\g{\theta} = (\m{X}^N, \m{U}^N, \g{\Lambda}^N)$.
The key steps in the analysis are the estimation of the residual
$\left\|\mathcal{T}\left(\g{\theta}^*\right)\right\|$,
the proof that
$\nabla\mathcal{T}(\g{\theta}^*)$ is invertible,
and the derivation of a bound for
$\|\nabla\mathcal{T}(\g{\theta}^*)^{-1}\|$ that is independent of $N$.
In our context, for the norm in $\C{X}$, we take
\begin{equation}\label{Xnorm}
\|\g \theta\|=\|({\bf X},{\bf U}, {\g \Lambda})\|_\infty
=\max\{\|\bf X\|_\infty, \|\bf U\|_\infty,\|\g \Lambda\|_\infty\}.
\end{equation}
For this norm, the left side of (\ref{maineq}) and the left side of (\ref{abs})
are the same.
The norm on $\C{Y}$ enters into the estimation of both the residual
$\|\mathcal{T}(\g{\theta}^*)\|$ in
(\ref{abs}) and the parameter
$\mu := \|\nabla\mathcal{T}(\g{\theta}^*)^{-1}\|$.
In our context,
we think of an element of $\C{Y}$ as a vector with components
$\m{y}_i$, $1 \le i \le 3N + 2$,
where $\m{y}_i \in \mathbb{R}^n$ for $1 \le i \le 2N + 2$ and
$\m{y}_i \in \mathbb{R}^m$ for $i > 2N + 2$.
For example, $\C{T}_1(\m{X},\m{U}, \g{\Lambda}) \in \mathbb{R}^{nN}$
corresponds to the components $\m{y}_i \in \mathbb{R}^n$, $1 \le i \le N$.
For the norm in $\C{Y}$, we take
\begin{equation}\label{Ynorm}
\|\m{y}\|_\infty = \sup \{ |\m{y}_i| : 1 \le i \le 3N + 2 \} .
\end{equation}
%
\section{Analysis of the residual}
\label{residual}
We now establish a bound for the residual.
\smallskip
\begin{lemma}\label{residuallemma}
If {\rm (A1)} holds, then there
exits a constant $c$ independent of $N$ such that
\begin{equation}\label{delta}
\|\mathcal{T}(\m{X}^*, \m{U}^*, \g{\Lambda}^*)\|_\infty \le
c N^{2-\eta}
\end{equation}
for all $N > \eta+1$.
\end{lemma}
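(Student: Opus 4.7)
The plan is to evaluate each of the five components of $\mathcal{T}$ at $(\m{X}^*,\m{U}^*,\g{\Lambda}^*)$ separately. Two of them vanish outright: $\mathcal{T}_4(\m{X}^*,\m{U}^*,\g{\Lambda}^*)=\m{0}$ by the terminal condition $\g{\lambda}^*(1)=\nabla C(\m{x}^*(1))$ in (\ref{costate}), and $\mathcal{T}_5(\m{X}^*,\m{U}^*,\g{\Lambda}^*)=\m{0}$ by the minimum principle (\ref{controlmin}). The remaining work is to estimate $\mathcal{T}_1$, $\mathcal{T}_2$, and $\mathcal{T}_3$.

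For $\mathcal{T}_1$, I would observe that since $\m{X}_j^*=\m{x}^*(\tau_j)$ for $0\le j\le N$ and since the dimension of $\C{P}_N$ equals the number of interpolation nodes $\tau_0,\ldots,\tau_N$, the sum $\sum_{j=0}^N D_{ij}\m{X}_j^*$ is exactly $\dot{\m{x}}^I(\tau_i)$, where $\m{x}^I\in\C{P}_N^n$ is the polynomial interpolant of $\m{x}^*$ at those nodes. Combining with the continuous dynamics $\m{f}(\m{x}^*(\tau_i),\m{u}^*(\tau_i))=\dot{\m{x}}^*(\tau_i)$ gives
\[
\mathcal{T}_{1i}(\m{X}^*,\m{U}^*,\g{\Lambda}^*)=\dot{\m{x}}^I(\tau_i)-\dot{\m{x}}^*(\tau_i).
\]
For $\mathcal{T}_3$ I would use the fact, proved in \cite{GargHagerRao10a}, that $\sum_{j=1}^{N+1}D_{ij}^\dagger \g{\Lambda}_j^*$ is the derivative at $\tau_i$ of the polynomial $\g{\lambda}^I\in\C{P}_N^n$ interpolating $\g{\lambda}^*$ at $\tau_1,\ldots,\tau_N,\tau_{N+1}=1$, and then invoke the continuous costate equation (\ref{costate}) to get $\mathcal{T}_{3i}=\dot{\g{\lambda}}^I(\tau_i)-\dot{\g{\lambda}}^*(\tau_i)$.

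The core estimate is therefore a derivative interpolation error bound of order $N^{2-\eta}$ for a $\C{C}^{\eta+1}$ function interpolated at $N+1$ Gauss-type nodes. I would obtain this by the standard three-step sandwich: (i) invoke a Jackson-type simultaneous approximation theorem to pick $p\in\C{P}_N^n$ with $\|\m{x}^*-p\|_\infty\le cN^{-(\eta+1)}$ and $\|\dot{\m{x}}^*-\dot p\|_\infty\le cN^{-\eta}$; (ii) since $\m{x}^I-p\in\C{P}_N^n$ agrees with $p-\m{x}^*$ at the nodes, the Lebesgue constant of the Gauss$+$endpoint node set, which grows only logarithmically in $N$, yields $\|\m{x}^I-p\|_\infty\le c(\log N)\,N^{-(\eta+1)}$; (iii) apply Markov's inequality $\|\dot q\|_\infty\le N^2\|q\|_\infty$ to $q=\m{x}^I-p\in\C{P}_N^n$ and combine with the triangle inequality to get $\|\dot{\m{x}}^I-\dot{\m{x}}^*\|_\infty\le cN^{2-\eta}$. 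The same argument with $\g{\lambda}^*$ in place of $\m{x}^*$ handles $\mathcal{T}_3$.

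Finally, $\mathcal{T}_2(\m{X}^*,\m{U}^*,\g{\Lambda}^*)=\m{x}^*(1)-\m{x}^*(-1)-\sum_{j=1}^N\omega_j\dot{\m{x}}^*(\tau_j)$ is the Gauss quadrature error applied to $\dot{\m{x}}^*\in\C{C}^\eta$, which by standard estimates is $O(N^{-\eta})$, comfortably better than $N^{2-\eta}$. Taking the maximum over the five blocks gives the stated bound. The main technical obstacle will be the sharpness in step (iii): justifying that a Markov-type argument plus the logarithmic Lebesgue constant for the Gauss$+$endpoint node family really does produce the clean exponent $2-\eta$ without incurring extra logarithms that the rest of the argument cannot absorb. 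Assuming $N>\eta+1$ guarantees the Jackson approximation exists with the required degree, and all constants produced are independent of $N$, so the conclusion (\ref{delta}) follows.
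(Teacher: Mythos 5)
Your proposal is correct, and for four of the five blocks it follows the paper's own skeleton exactly: the vanishing of $\C{T}_4$ and $\C{T}_5$ via the terminal condition and the minimum principle, and the reduction of $\C{T}_1$ and $\C{T}_3$ to the derivative interpolation errors $\dot{\m{x}}^I-\dot{\m{x}}^*$ and $\dot{\g{\lambda}}^I-\dot{\g{\lambda}}^*$ at the collocation points (the paper cites Eq.~(7) and Thm.~1 of \cite{GargHagerRao10a} for the two interpolation identities, as you do). You diverge in how the two key estimates are obtained. For the derivative interpolation error, the paper simply invokes Proposition~2.1 and Lemma~2.2 of \cite{HagerHouRao15a}, whose explicit bound $(6e/(N-1))^\eta\left[(1+2N^2)+6eN(1+c_1\sqrt{N})\right]\left(12\|\m{x}^{(\eta+1)}\|/(\eta+1)\right)$ has visibly the same anatomy as your Jackson--Lebesgue--Markov sandwich (a Markov factor $N^2$ and a Lebesgue constant), so your self-contained derivation is a legitimate substitute. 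One factual slip: the Lebesgue constant of the Gauss-plus-endpoint node set is \emph{not} logarithmic; it grows like $\sqrt{N}$, which is precisely Theorem~4.1 of \cite{HagerHouRao15a}, quoted in the paper as the bound $c_1\sqrt{N}$. Fortunately the obstacle you flagged is benign: with $\Lambda_N\le c_1\sqrt{N}$ your steps (ii)--(iii) give $N^2\cdot\sqrt{N}\cdot N^{-(\eta+1)}=N^{3/2-\eta}\le N^{2-\eta}$, and indeed any polynomially bounded $\Lambda_N=O(N)$ would still produce the exponent $2-\eta$, so no extra logarithms need absorbing. Your treatment of $\C{T}_2$ is genuinely different from, and simpler than, the paper's: you bound the Gauss quadrature error of $\dot{\m{x}}^*\in\C{C}^\eta$ directly (exactness on $\C{P}_{2N-1}$ plus positive weights summing to $2$ gives an error at most $4E_{2N-1}(\dot{\m{x}}^*)=O(N^{-\eta})$ by Jackson), whereas the paper subtracts the exactness identity $\m{x}^I(1)-\m{x}^I(-1)=\sum_j\omega_j\dot{\m{x}}^I(\tau_j)$ for the interpolant and then reuses the bound on $\dot{\m{x}}^I-\dot{\m{x}}^*$ together with the endpoint interpolation error $|\m{x}^*(1)-\m{x}^I(1)|$; your route even yields the sharper $O(N^{-\eta})$ for this block, and both comfortably meet (\ref{delta}).
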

\begin{proof}
By the definition of $\C{T}$,
$\C{T}_4 (\m{X}^*,  \m{U}^*, \g{\Lambda}^*) = \m{0}$ since
$\m{x}^*$ and $\g{\lambda}^*$ satisfy the boundary condition in (\ref{costate}).
Likewise,
$\C{T}_5(\m{X}^*,  \m{U}^*, \g{\Lambda}^*) = \m{0}$ since
(\ref{controlmin}) holds for all $t \in [-1, +1]$;
in particular, (\ref{controlmin}) holds at the collocation points.

Now let us consider $\C{T}_1$.
By \cite[Eq. (7)]{GargHagerRao10a},
\[
\sum_{j=0}^N D_{ij} \m{X}_j^* = \dot{\m{x}}^I(\tau_i), \quad 1 \le i \le N,
\]
where $\m{x}^I \in \C{P}_N^n$ is the (interpolating) polynomial that passes
through $\m{x}^*(\tau_i)$ for $0 \le i \le N$.
Since $\m{x}^*$ satisfies the dynamics of (\ref{P}), it follows that
$\m{f}(\m{X}_i^*, \m{U}_i^*) = \dot{\m{x}}^* (\tau_i)$.
Hence, we have
\begin{equation}\label{T1i}
\C{T}_{1i}(\m{X}^*,  \m{U}^*, \g{\Lambda}^*) =
\dot{\m{x}}^I(\tau_i) - \dot{\m{x}}^*(\tau_i) .
\end{equation}
We combine Proposition~2.1 and Lemma~2.2
in \cite{HagerHouRao15a} to obtain for $N > \eta+1$,
\[
\|\dot{\m{x}}^I - \dot{\m{x}}^*\|_\infty  \le
\left( \frac{6e}{N-1} \right)^\eta
\left[ (1+2N^2) + 6e N (1+c_1 \sqrt{N}) \right]
\left( \frac{ 12 \|x^{(\eta+1)}\|}{\eta+1} \right) .
\]
where $\m{x}^{(\eta+1)}$ is the $(\eta+1)$-st derivative of $\m{x}$
and $c_1 \sqrt{N}$ is a bound for the Lebesgue constant of the
point set $\tau_i$, $0 \le i \le N$,
given in Theorem~4.1 of \cite{HagerHouRao15a}.
Hence, there exists a constant $c_2$, independent of $N$
but dependent on $\eta$, such that
\begin{equation} \label{h2}
\|\dot{\m{x}}^I - \dot{\m{x}}^*\|_\infty  \le
c_2 N^{2-\eta}.
\end{equation}
Consequently, $\C{T}_{1}(\m{X}^*,  \m{U}^*, \g{\Lambda}^*)$
complies with the bound (\ref{delta}).

Next, let us consider
\begin{equation}\label{h1}
\C{T}_2 (\m{X}^*,  \m{U}^*, \g{\Lambda}^*) =
\m{x}^* (1) - \m{x}^*(-1)
- \sum_{j=1}^N \omega_j \m{f}(\m{x}^*(\tau_j), \m{u}^*(\tau_j)) .
\end{equation}
By the fundamental theorem of calculus and the fact that
$N$-point Gauss quadrature is exact for polynomials of degree up to
$2N - 1$, we have
\begin{equation} \label{h3}
\m{0} =
{\bf x}^I(1)-{\bf x}^I(-1)-\int_{-1}^1\dot{\bf x}^I(t)dt =
{\bf x}^I(1)-{\bf x}^I(-1)-\sum_{j=1}^N\omega_j\dot{\bf x}^I(\tau_j) .
\end{equation}
Subtract (\ref{h3}) from (\ref{h1}) to obtain
\begin{equation}\label{h4}
\C{T}_2 (\m{X}^*,  \m{U}^*, \g{\Lambda}^*) =
{\bf x}^*(1)-{\bf x}^I(1)+\sum_{j=1}^N\omega_j\left(\dot{\bf x}^I(\tau_j)
-\dot{\bf x}^*(\tau_j)\right)
\end{equation}
Since $\omega_i > 0$ and their sum is 2, it follows (\ref{h2}) that
\begin{equation}\label{h5}
\left| \sum_{j=1}^N\omega_j\left(\dot{\bf x}^N(\tau_j)
-\dot{\bf x}^*(\tau_j)\right) \right| \le 2 c_2 N^{2-\eta} .
\end{equation}
By Theorem~15.1 in \cite{Trefethen13} and
Lemma~2.2 and Theorem~4.1 in \cite{HagerHouRao15a}, we have
\begin{eqnarray}
| {\bf x}^*(1)-{\bf x}^I(1)| &\le& \|\m{x}^* - \m{x}^I\|_\infty \nonumber \\
&\le& (1+ c_1 \sqrt{N}) \left( \frac{12}{\eta+2} \right)
\left( \frac{6e}{N} \right)^{\eta+1} \|\m{x}^{(\eta+1)}\|_\infty .
\label{h6}
\end{eqnarray}
We combine (\ref{h4})--(\ref{h6}) to see that
$\C{T}_2 (\m{X}^*,  \m{U}^*, \g{\Lambda}^*)$
complies with the bound (\ref{delta}).

Finally, let us consider $\C{T}_3$.
By \cite[Thm. 1]{GargHagerRao10a},
\[
\sum_{j=1}^{N+1} D_{ij}^\dagger \g{\Lambda}_j^* =
\dot{\g{\lambda}}^I(\tau_i), \quad 1 \le i \le N,
\]
where $\g{\lambda}^I \in \C{P}_N^n$ is the
(interpolating) polynomial that passes
through $\g{\Lambda}^*_j = \g{\lambda}(\tau_j)$ for $1 \le j \le N+1$.
Since $\g{\lambda}^*$ satisfies (\ref{costate}), it follows that
$\dot{\g{\lambda}}^*(\tau_i) =$
$-\nabla_x H({\bf X}_i^*,{\bf U}_i^*, {\g \Lambda}_i^*)$.
Hence, we have
\[
\C{T}_{3i}(\m{X}^*,  \m{U}^*, \g{\Lambda}^*) =
\dot{\g{\lambda}}^I(\tau_i) - \dot{\g{\lambda}}^*(\tau_i) .
\]
Exactly as we handled $\C{T}_1$ in (\ref{T1i}), we conclude that
$\C{T}_{3}(\m{X}^*,  \m{U}^*, \g{\Lambda}^*)$
complies with the bound (\ref{delta}).
This completes the proof.
\end{proof}

\section{Invertibility}
\label{inverse}
In this section, we show that the derivative $\nabla \C{T} (\g{\theta}^*)$
is invertible.
This is equivalent to showing that for each $\m{y} \in \C{Y}$,
there is a unique $\g{\theta} \in \C{X}$ such that
$\nabla \C{T} (\g{\theta}^*)[\g{\theta}] = \m{y}$.
In our application, $\g{\theta}^* = (\m{X}^*, \m{U}^*, \g{\Lambda}^*)$
and $\g{\theta} = (\m{X}, \m{U}, \g{\Lambda})$.
To simplify the notation, we let $\nabla \C{T}^*[\m{X}, \m{U}, \g{\Lambda}]$
denote the derivative of $\C{T}$ evaluated at
$(\m{X}^*, \m{U}^*, \g{\Lambda}^*)$ operating on $(\m{X}, \m{U}, \g{\Lambda})$.
This derivative involves the following 6 matrices:
\[
\begin{array}{ll}
{\bf A}_i=\nabla_x{\bf f}({\bf x}^*(\tau_i),{\bf u}^*(\tau_i)),
&{\bf B}_i=\nabla_u{\bf f}({\bf x}^*(\tau_i),{\bf u}^*(\tau_i)),\\
{\bf Q}_i=\nabla_{xx}H\left({\bf x}^*(\tau_i),{\bf u}^*(\tau_i),
{\g \lambda}^*(\tau_i)\right),
&{\bf S}_i=\nabla_{ux}H\left({\bf x}^*(\tau_i),{\bf u}^*(\tau_i),
{\g \lambda}^*(\tau_i)\right),\\
{\bf R}_i=\nabla_{uu}H\left({\bf x}^*(\tau_i),{\bf u}^*(\tau_i),
{\g \lambda}^*(\tau_i)\right), &{\bf T}=\nabla^2C({\bf x}^*(1)).
\end{array}
\]
With this notation,
the 5 components of $\nabla \C{T}^*[\m{X}, \m{U}, \g{\Lambda}]$ are as follows:
\begin{eqnarray*}
\nabla \C{T}_{1i}^*[\m{X}, \m{U}, \g{\Lambda}] &=&
\left( \sum_{j=1}^N{D}_{ij}{\bf X}_j \right)
- \m{A}_i \m{X}_i - \m{B}_i \m{U}_i,
\quad 1\leq i\leq N ,\\
\nabla \C{T}_2^*[\m{X}, \m{U}, \g{\Lambda}] &=&
{\bf X}_{N+1}-\sum_{j=1}^N\omega_j
(\m{A}_j \m{X}_j + \m{B}_j \m{U}_j),
\\
\nabla \C{T}_{3i}^*[\m{X}, \m{U}, \g{\Lambda}] &=&
\left( \sum_{j=1}^{N+1}{D}_{ij}^\dag{\g \Lambda}_j \right) +
\m{A}_i\tr \g{\Lambda}_i  + \m{Q}_i \m{X}_i + \m{S}_i \m{U}_i,
\quad 1 \leq i \leq N , \\
\nabla \C{T}_4^*[\m{X}, \m{U}, \g{\Lambda}] &=&
{\g \Lambda}_{N+1} -\m{T} \m{X}_{N+1}, \\[.05in]
\nabla \C{T}_{5i}^*[\m{X}, \m{U}, \g{\Lambda}] &=&
\m{S}_i\tr\m{X}_i + \m{R}_i \m{U}_i + \m{B}_i\tr \g{\Lambda}_i,
\quad 1\leq i\leq N .
\end{eqnarray*}
Notice that $\m{X}_0$ does not appear in $\nabla \C{T}^*$ since
$\m{X}_0$ is treated as a constant whose gradient vanishes.

The analysis of invertibility starts with the first component of
$\nabla \C{T}^*$.
\smallskip

\begin{lemma}
\label{feasible}
If {\rm (P1)} and {\rm (A3)} hold, then for each $\m{q} \in \mathbb{R}^n$
and $\m{p} \in \mathbb{R}^{nN}$ with $\m{p}_i \in \mathbb{R}^n$,
the linear system
\begin{eqnarray}
\left( \sum_{j=1}^{N}{D}_{ij}{\bf X}_j \right) - \m{A}_i \m{X}_i  &=& \m{p}_{i}
\quad 1\leq i\leq N , \label{h99} \\
{\bf X}_{N+1} -\sum_{j=1}^N\omega_j
(\m{A}_j \m{X}_j + \m{B}_j \m{U}_j) &=&{\m q}, \label{h100}
\end{eqnarray}
has a unique solution $\m{X}_j \in \mathbb{R}^{n}$,
$1 \le j \le N+1$.
This solution has the bound
\begin{equation}\label{xjbound}
\|\m{X}_j\|_\infty \le 4\|\m{p}\|_\infty + \| \m{q}\|_\infty, \quad
1 \le j \le N+1.
\end{equation}
\end{lemma}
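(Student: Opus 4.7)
The plan is to decouple the system: (\ref{h99}) involves only $\m{X}_1,\ldots,\m{X}_N$ (the $j=0$ term is absent since $\m{X}_0$ is a constant, and $\m{X}_{N+1}$ does not appear), so I would first solve (\ref{h99}) uniquely for those, and then (\ref{h100}) would read off $\m{X}_{N+1}$ explicitly, with the $\m{B}_j \m{U}_j$ pieces treated as data absorbed into the right-hand side $\m{q}$.

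Stacking $\m{X}_1,\ldots,\m{X}_N$ into $\hat{\m{X}}\in\mathbb{R}^{nN}$ and similarly for $\m{p}$, equation (\ref{h99}) takes the form
\[
\bigl(\m{D}_{1:N}\otimes\m{I}_n - \m{A}_{\mathrm{bd}}\bigr)\hat{\m{X}} = \hat{\m{p}},
\qquad \m{A}_{\mathrm{bd}} := \mathrm{blockdiag}(\m{A}_1,\ldots,\m{A}_N).
\]
By (P1) the matrix $\m{D}_{1:N}$ is invertible, so after left-multiplying by $\m{D}_{1:N}^{-1}\otimes\m{I}_n$, the system reads
\[
\bigl(\m{I}_{nN} - (\m{D}_{1:N}^{-1}\otimes\m{I}_n)\m{A}_{\mathrm{bd}}\bigr)\hat{\m{X}} = (\m{D}_{1:N}^{-1}\otimes\m{I}_n)\hat{\m{p}},
\]
and the aim is to show that the perturbation of the identity on the left is a strict contraction in the block sup-norm $\|\hat{\m{X}}\|_\infty := \max_i |\m{X}_i|$ (Euclidean inside), so that the Neumann series applies.

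The main technical ingredient is norm bookkeeping. The Kronecker factor $\m{I}_n$ makes $\m{D}_{1:N}^{-1}\otimes\m{I}_n$ act blockwise by scalar combinations, so the triangle inequality together with (P1) gives the operator norm bound $\|\m{D}_{1:N}^{-1}\|_\infty \le 2$ in the block sup-norm. For $\m{A}_{\mathrm{bd}}$, what is needed is the spectral bound $\|\m{A}_i\|_2 \le 1/4$; this is precisely where both halves of (A3) are used, since from $\|\m{A}_i\|_\infty \le 1/4$ and $\|\m{A}_i\tr\|_\infty = \|\m{A}_i\|_1 \le 1/4$, the interpolation inequality $\|\m{A}_i\|_2 \le \sqrt{\|\m{A}_i\|_1 \|\m{A}_i\|_\infty}$ gives $\|\m{A}_i\|_2 \le 1/4$. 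The composition $(\m{D}_{1:N}^{-1}\otimes\m{I}_n)\m{A}_{\mathrm{bd}}$ therefore has block sup-norm at most $1/2$, so the Neumann series yields invertibility of the operator on the left with inverse of norm at most $2$. This gives both existence and uniqueness of $\hat{\m{X}}$ and the bound $\|\hat{\m{X}}\|_\infty \le 2\cdot 2\,\|\m{p}\|_\infty = 4\|\m{p}\|_\infty$.

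Finally, (\ref{h100}) determines $\m{X}_{N+1}$ explicitly. Using $\sum_j\omega_j=2$, the spectral bound $\|\m{A}_j\|_2 \le 1/4$, and the estimate on $\hat{\m{X}}$ just obtained, I get $|\m{X}_{N+1}| \le \|\m{q}\|_\infty + \tfrac{1}{2}\|\hat{\m{X}}\|_\infty \le \|\m{q}\|_\infty + 2\|\m{p}\|_\infty$, comfortably within the claimed $4\|\m{p}\|_\infty + \|\m{q}\|_\infty$. The step I expect to be most delicate is the norm translation: (A3) is stated in the $\ell_\infty$ matrix norm, while the block-Euclidean structure of $\mathbb{R}^{nN}$ calls for a spectral-norm bound on each $\m{A}_i$, and the two halves of (A3) are exactly what enables this translation via the interpolation inequality.
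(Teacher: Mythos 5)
Your proposal is correct and follows essentially the same route as the paper's proof: stack $\m{X}_1,\ldots,\m{X}_N$, write (\ref{h99}) as $(\m{D}_{1:N}\otimes\m{I}_n-\m{A})\bar{\m{X}}=\m{p}$ with $\m{A}$ block diagonal, invert via (P1) and a Neumann series since the perturbation $\bar{\m{D}}^{-1}\m{A}$ has norm at most $1/2$, then read $\m{X}_{N+1}$ off (\ref{h100}) using $\sum_j\omega_j=2$, obtaining the same constants ($4\|\m{p}\|_\infty$ for $j\le N$ and $\|\m{q}\|_\infty+2\|\m{p}\|_\infty$ for $j=N+1$). The only divergence is norm bookkeeping: the paper stays in the $\ell_\infty$-induced (row-sum) norm and uses only $\|\m{A}_i\|_\infty\le 1/4$ here, reserving the transpose half of (A3) for the costate analysis, whereas you work in the block sup-norm with Euclidean blocks and therefore need $\|\m{A}_i\|_2\le 1/4$, which you correctly obtain from both halves of (A3) via $\|\m{A}\|_2\le\sqrt{\|\m{A}\|_1\,\|\m{A}\|_\infty}$ --- a valid variant that, if anything, aligns more literally with the paper's stated convention that $\|\cdot\|_\infty$ on stacked discrete variables takes Euclidean norms inside blocks.
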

\smallskip

\begin{proof}
Let $\bar{\m{X}}$ be the vector obtained by vertically stacking
$\m{X}_1$ through $\m{X}_N$,
let ${\m{A}}$ be the block diagonal matrix
with $i$-th diagonal block $\m{A}_i$, $1 \le i \le N$,
and define
$\bar{\m{D}} = {\bf D}_{1:N}\otimes {\bf I}_n$ where
$\otimes$ is the Kronecker product.
With this notation, the linear system (\ref{h99}) can be expressed
$(\bar{\m{D}} - {\m{A}}) \bar{\m{X}} = \m{p}$.
By (P1) ${\bf D}_{1:N}$ is invertible which implies that
$\bar{\m{D}}$ is invertible with
$\bar{\m{D}}^{-1} = {\bf D}_{1:N}^{-1}\otimes {\bf I}_n$.
Moreover,
$\|\bar{{\bf D}}^{-1}\|_\infty =$ $\|{\bf D}_{1:N}^{-1}\|_\infty \le 2$ by (P1).
By (A3) $\|{\m{A}}\|_\infty \le 1/4$ and
$\|\bar{\m{D}}^{-1}{\m{A}}\|_\infty \le$
$\|\bar{\m{D}}^{-1}\|_\infty \|{\m{A}}\|_\infty \le 1/2$.
By \cite[p. 351]{HJ12}, 
${\bf I}-\bar{\bf D}^{-1}{\bf A}$ is invertible and
$\left\|({\bf I}-{\bf D}^{-1}{\bf A})^{-1}\right\|_\infty \leq 2$.
Consequently, $\bar{\m{D}} - {\m{A}} =$
$\bar{\m{D}}(\m{I} - \bar{\m{D}}^{-1}{\m{A}})$ is invertible, and
\[
\|(\bar{\m{D}} - {\m{A}})^{-1}\|_\infty \le
\|(\m{I} - \bar{\m{D}}^{-1}{\m{A}})^{-1}\|_\infty
\|\bar{\m{D}}^{-1}\|_\infty \le 4.
\]
Thus there exists a unique $\bar{\m{X}}$ such that
$(\bar{\m{D}} - {\m{A}}) \bar{\m{X}} = \m{p}$, and
\begin{equation}\label{h100a}
\|\m{X}_j\|_\infty \le 4\|\m{p}\|_\infty , \quad
1 \le j \le N.
\end{equation}
By (\ref{h100}), we have
\begin{equation}\label{h100b}
\|\m{X}_{N+1}\|_\infty \le \|\m{q}\|_\infty + \sum_{j=1}^N
\omega_j \|\m{A}_j\|_\infty \|\m{X}_j\|_\infty .
\end{equation}
Since $\|\m{A}_j \|_\infty \le 1/4$ by (A3) and the $\omega_j$ are positive and
sum to 2, (\ref{h100a}) and (\ref{h100b}) complete the proof of (\ref{h100}).
\end{proof}
\smallskip

Next, we establish the invertibility of $\nabla \C{T}^*$.
\smallskip

\begin{proposition}
If {\rm (P1)}, {\rm (A2)} and {\rm (A3)} hold, then
$\nabla \C{T}^*$ is invertible.
\end{proposition}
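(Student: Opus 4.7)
My plan is to exploit two facts: $\nabla \C{T}^*$ is a linear map between finite-dimensional spaces of equal dimension $2n(N+1)+mN$, and the homogeneous system $\nabla \C{T}^*[\m{X},\m{U},\g{\Lambda}] = \m{0}$ amounts to the KKT conditions for a linear-quadratic control problem whose cost is strongly convex by (A2). It therefore suffices to verify injectivity: every triple $(\m{X},\m{U},\g{\Lambda})$ in the kernel of $\nabla \C{T}^*$ must vanish.

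First I would eliminate the primal variables. Assuming $\nabla \C{T}^*[\m{X},\m{U},\g{\Lambda}] = \m{0}$, form the weighted inner product
\[
\Sigma \;=\; \sum_{i=1}^N \omega_i \bigl[\, \m{X}_i\tr \nabla\C{T}_{3i}^* + \m{U}_i\tr \nabla\C{T}_{5i}^* \,\bigr],
\]
which is zero. A summation-by-parts identity based on the scaling $D_{ij}^\dag = -(\omega_j/\omega_i) D_{ji}$ from (\ref{eq15}) and the row-sum relation (\ref{eq151}), combined with the homogeneous primal equations $\nabla\C{T}_1^* = \m{0}$ and $\nabla\C{T}_2^* = \m{0}$, should collapse all terms linear in $\g{\Lambda}$ into the single boundary contribution $\g{\Lambda}_{N+1}\tr \m{X}_{N+1}$. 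Using $\g{\Lambda}_{N+1} = \m{T}\m{X}_{N+1}$ from $\nabla\C{T}_4^* = \m{0}$ then yields
\[
0 \;=\; \m{X}_{N+1}\tr \m{T}\m{X}_{N+1} + \sum_{i=1}^N \omega_i\bigl[\, \m{X}_i\tr \m{Q}_i \m{X}_i + 2\m{X}_i\tr \m{S}_i \m{U}_i + \m{U}_i\tr \m{R}_i \m{U}_i \,\bigr].
\]
The coercivity hypothesis (A2) bounds the right side below by $\alpha\bigl(|\m{X}_{N+1}|^2 + \sum_i \omega_i(|\m{X}_i|^2 + |\m{U}_i|^2)\bigr)$, and positivity of the Gauss weights forces $\m{X} = \m{0}$ and $\m{U} = \m{0}$.

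With the primal variables zero, $\nabla\C{T}_4^* = \m{0}$ yields $\g{\Lambda}_{N+1} = \m{0}$, and $\nabla\C{T}_3^* = \m{0}$ reduces to the homogeneous system $\sum_{j=1}^N D_{ij}^\dag \g{\Lambda}_j + \m{A}_i\tr \g{\Lambda}_i = \m{0}$ for $1 \le i \le N$. Invoking (\ref{eq15}) once more, the coefficient matrix of this system is a diagonal-weight similarity of the transpose of the block matrix $\m{D}_{1:N}\otimes \m{I}_n - \bar{\m{A}}$ that appears in the proof of Lemma~\ref{feasible} (with $\bar{\m{A}}$ block diagonal, diagonal blocks $\m{A}_i$). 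That block matrix is invertible by (P1) and (A3), so its transpose is invertible as well and $\g{\Lambda} = \m{0}$. The main obstacle I anticipate is the summation-by-parts bookkeeping in the middle paragraph: tracking the quadrature weights carefully so that the $\m{D}$ terms coming from $\nabla\C{T}_1^*$ cancel against the $\m{D}^\dag$ terms coming from $\nabla\C{T}_3^*$ modulo the single boundary piece $\g{\Lambda}_{N+1}\tr \m{X}_{N+1}$, which is precisely where the terminal condition $\nabla\C{T}_4^*$ and the coercivity (A2) enter.
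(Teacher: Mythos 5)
Your proof is correct, and it takes a genuinely different route from the paper's: you prove \emph{injectivity} of the square linear map $\nabla \C{T}^*$, whereas the paper proves \emph{surjectivity}. Your energy identity checks out: with $\omega_i D_{ij}^\dag = -\omega_j D_{ji}$ from (\ref{eq15}) and the row-sum relation (\ref{eq151}), the double sum in your $\Sigma$ splits so that the $j\le N$ part cancels the $\m{A}_i\tr\g{\Lambda}_i$ and $\m{B}_i\tr\g{\Lambda}_i$ contributions via $\nabla\C{T}_1^*=\m{0}$, while the $j=N+1$ part becomes $\sum_j \omega_j(\m{A}_j\m{X}_j+\m{B}_j\m{U}_j)\tr\g{\Lambda}_{N+1} = \m{X}_{N+1}\tr\g{\Lambda}_{N+1}$ via $\nabla\C{T}_2^*=\m{0}$, leaving exactly $0=\C{Q}(\m{X},\m{U})$, the quadratic form (\ref{Q}); then (A2) and positivity of the Gauss weights force $\m{X}=\m{0}$, $\m{U}=\m{0}$. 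Your costate step is also sound: since the block-diagonal matrix with blocks $\m{A}_i\tr$ commutes with $\m{W}\otimes\m{I}_n$, relation (\ref{eq15}) gives $\m{D}_{1:N}^\dag\otimes\m{I}_n + \m{A}\tr = -(\m{W}^{-1}\otimes\m{I}_n)(\bar{\m{D}}-\m{A})\tr(\m{W}\otimes\m{I}_n)$ in the notation of Lemma~\ref{feasible}, and $\bar{\m{D}}-\m{A}$ is invertible there by (P1) and (A3), so $\g{\Lambda}=\m{0}$. The paper instead constructs the strongly convex quadratic program (\ref{QP}), with the linear term (\ref{L}) engineered so that its KKT conditions are $\nabla\C{T}^*[\m{X},\m{U},\g{\Lambda}]=\m{y}$; Lemma~\ref{feasible} supplies feasibility, (A2) via (\ref{lem1}) supplies existence and uniqueness of a minimizer, and linearity of the constraints yields multipliers, so the system is solvable for every $\m{y}$, and squareness gives invertibility. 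What the paper's route buys is reuse: the same QP is the vehicle for the quantitative $\omega$-norm bounds of Lemma~\ref{l2} and hence Lemma~\ref{inf-bounds}, which is what Theorem~\ref{maintheorem} actually needs (bare invertibility without an $N$-uniform bound on $\|(\nabla\C{T}^*)^{-1}\|$ would not suffice, and your argument, by itself, gives no such quantitative control). What your route buys is self-containedness: it makes explicit the summation-by-parts bookkeeping that the paper defers to \cite{GargHagerRao10a}, and it cleanly separates the roles of the hypotheses---(A2) alone kills the primal kernel, while (P1) and (A3) are needed only for the costate block---whereas in the paper (P1) and (A3) enter through feasibility of the QP.
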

\smallskip

\begin{proof}
We formulate a strongly convex quadratic programming
problem whose first-order optimality conditions reduce to
$\nabla \C{T}^*[\m{X}, \m{U}, \g{\Lambda}] = \m{y}$.
Due to the strong convexity of the objective function,
the quadratic programming has a solution and there exists
$\g{\Lambda}$ such that $\nabla \C{T}^*[\m{X}, \m{U}, \g{\Lambda}] = \m{y}$.
Since $\C{T}^*$ is square and
$\nabla \C{T}^*[\m{X}, \m{U}, \g{\Lambda}] = \m{y}$ has a solution
for each choice of $\m{y}$, it follows that $\nabla \C{T}^*$ is invertible.

The quadratic program is
\begin{equation}\label{QP}
\left.
\begin{array}{cl}
\mbox {minimize} &\frac{1}{2} \mathcal{Q}({\bf X},{\bf U})
+ \C{L}(\m{X}, \m{U}) \\[.08in]
\mbox {subject to} &\sum_{j=1}^N{D}_{ij}{\bf X}_j
={\bf A}_i{\bf X}_i+ {\bf B}_i{\bf U}_i+{\bf y}_{1i},
\quad 1\leq i \leq N, \\
&{\bf X}_{N+1}=\sum_{j=1}^N\omega_j
\left({\bf A}_j{\bf X}_j+{\bf B}_j{\bf U}_j
\right)+{\bf y}_2 ,
\end{array}
\right\}
\end{equation}
where the quadratic and linear terms in the objective are
\begin{eqnarray}
\mathcal{Q}({\bf X},{\bf U})&=&
{\bf X}_{N+1}\tr{\bf T}{\bf X}_{N+1}+\sum_{i=1}^N\omega_i
\left({\bf X}_i\tr{\bf Q}_i{\bf X}_i
+2{\bf X}_i\tr{\bf S}_i{\bf U}_i
+{\bf U}_i\tr{\bf R}_i{\bf U}_i\right), \label{Q}\\
\C{L}(\m{X}, \m{U}) &=& \m{y}_4\tr \m{X}_{N+1} - \sum_{i=1}^N\omega_i
\left( \m{y}_{3i}\tr \m{X}_i + \m{y}_{5i}\tr \m{U}_i \right) . \label{L}
\end{eqnarray}
The linear term was chosen so that the first-order optimality conditions
for (\ref{QP}) reduce to
$\nabla \C{T}^*[\m{X}, \m{U}, \g{\Lambda}] = \m{y}$.
See \cite{GargHagerRao10a} for the
manipulations needed to obtain the first-order optimality conditions
in this form.
By (A2), we have
\begin{equation}\label{lem1}
\mathcal{Q}({\bf X},{\bf U})\geq \alpha \left(
|{\bf X}_{N+1}|^2+\sum_{i=1}^N
\omega_i\left(|{\bf X}_i|^2 +|{\bf U}_i|^2\right) \right).
\end{equation}
Since $\alpha$ and $\g{\omega}$ are strictly positive, the objective of
(\ref{QP}) is strongly convex, and by Lemma~\ref{feasible}, the
quadratic programming problem is feasible.
Hence, there exists a unique solution to (\ref{QP}) for any choice of $\m{y}$,
and since the constraints are linear,
the first-order conditions hold.
Consequently,
$\nabla \C{T}^*[\m{X}, \m{U}, \g{\Lambda}] = \m{y}$ has a solution for
any choice of $\m{y}$ and the proof is complete.
\end{proof}

\section{$\omega$-norm bounds}
\label{omegabounds}
In this section we obtain a bound for the
$(\m{X}, \m{U})$ component of the solution to
$\nabla \C{T}^*[\m{X}, \m{U}, \g{\Lambda}] = \m{y}$ is terms of $\m{y}$.
The bound we derive in this section is in terms of the
$\omega$-norms defined by
\begin{equation}\label{onorm}
\|{\bf X}\|_\omega^2= |\m{X}_{N+1}|^2 +
\sum_{i=1}^N\omega_i|{\bf X}_i|^2 \quad \mbox{and} \quad
\|{\bf U}\|_\omega^2= \sum_{i=1}^N\omega_i|{\bf U}_i|^2.
\end{equation}
This defines a norm since the Gauss quadrature weight $\omega_i > 0$
for each $i$.
Since the
$(\m{X}, \m{U})$ component of the solution to
$\nabla \C{T}^*[\m{X}, \m{U}, \g{\Lambda}] = \m{y}$ is a solution
of the quadratic program (\ref{QP}), we will bound the solution
to the quadratic program.

First, let us think more abstractly.
Let $\pi$ be a symmetric, continuous bilinear functional defined on
a Hilbert space $\C{H}$, let $\ell$ be a continuous linear functional,
let $\phi \in \C{H}$, and consider the quadratic program
\[
\min \left\{ \frac{1}{2}\pi(v+\phi,v+\phi) + \ell (v+\phi):
v\in \C{V} \right\},
\]
where $\C{V}$ is a subspace of $\C{H}$.
If $w$ is a minimizer, then by the first-order optimality conditions,
we have
\[
\pi (w, v) + \pi (\phi, v) + \ell (v) = 0 \quad \mbox{for all } v \in \C{V}.
\]
Inserting $v = w$ yields
\begin{equation}\label{nullw}
\pi (w,w) = -(\pi (w, \phi) + \ell (w)).
\end{equation}

We apply this observation to the quadratic program (\ref{QP}).
We identify $\ell$ with the linear functional $\C{L}$ in (\ref{L}),
and $\pi$ with the bilinear form associated with the quadratic term (\ref{Q}).
The subspace $\C{V}$ is the null space of the linear operator
in (\ref{QP}) and $\phi$ is a particular solution of the linear system.
The complete solution of (\ref{QP}) is the particular solution plus
the minimizer over the null space.

In more detail, let $\g{\chi}$ denote the solution to (\ref{h99})--(\ref{h100})
given by Lemma~\ref{feasible} for $\m{p} = \m{y}_1$ and $\m{q} = \m{y}_2$.
We consider the particular solution $(\m{X}, \m{U})$
of the linear system in (\ref{QP}) given by $(\g{\chi},\m{0})$.
The relation (\ref{nullw}) describing the null space component $(\m{X}, \m{U})$
of the solution is
\begin{equation}\label{nullx}
\C{Q}(\m{X}, \m{U}) = - \left( (\g{\chi}_N+\m{y}_4)\tr \m{TX}_N
+ \sum_{i=1}^N \omega_i \left[ (\m{Q}_i \g{\chi}_i - \m{y}_{3i})\tr \m{X}_i -
\m{y}_{5i}\tr \m{U}_i \right] \right) .
\end{equation}
Here the terms containing $\g{\chi}$ are associated with $\pi (w, \phi)$,
while the remaining terms are associated with $\ell$ or equivalently,
with $\C{L}$.
By (A2) we have the lower bound
\begin{equation}\label{lower}
\C{Q}(\m{X}, \m{U}) \ge \alpha (\|\m{X}\|_\omega^2 + \|\m{U}\|_\omega^2) .
\end{equation}
All the terms on the right side of (\ref{nullx}) can be bounded with
the Schwarz inequality; for example,
\begin{eqnarray}
\sum_{i=1}^N \omega_i \m{y}_{3i}\tr \m{X}_i &\le&
\left( \sum_{i=1}^N \omega_i |\m{y}_{3i}|^2 \right)^{1/2}
\left( \sum_{i=1}^N \omega_i |\m{X}_{i}|^2 \right)^{1/2} \nonumber \\
&\le& \sqrt{2} \|\m{y}_3\|_\infty
\left( \|\m{X}\|_\omega^2 + \|\m{U}\|_\omega^2  \right)^{1/2} . \label{upper}
\end{eqnarray}
The last inequality exploits the fact that the $\omega_i$ sum to 2 and
$|\m{y}_{3i}| \le \|\m{y}_3\|_\infty$.
To handle the terms involving $\g{\chi}$ in (\ref{nullx}),
we utilize the upper bound
$\|\g{\chi}_j\|_\infty \le 5 \|\m{y}\|_\infty$ based on Lemma~\ref{feasible}
with $\m{p} = \m{y}_1$ and $\m{q} = \m{y}_2$.
Combining upper bounds of the form (\ref{upper}) with the lower bound
(\ref{lower}), we conclude from (\ref{nullx}) that both $\|\m{X}\|_\omega$ and
$\|\m{U}\|_\omega$ are bounded by a constant times $\|\m{y}\|_\infty$.
The complete solution of (\ref{QP}) is the null space component that we
just bounded plus the particular solution $(\g{\chi}, \m{0})$.
Again, since $\|\g{\chi}_j\|_\infty \le 5 \|\m{y}\|_\infty$,
we obtain the following result.
\smallskip

\begin{lemma}\label{l2}
If {\rm (A2)--(A3)} and {\rm (P1)} hold, then there exists a constant $c$,
independent of $N$, such that the solution $(\m{X}, \m{U})$
of $(\ref{QP})$ satisfies $\|\m{X}\|_\omega \le c \|\m{y}\|_\infty$ and
$\|\m{U}\|_\omega \le c \|\m{y}\|_\infty$.
\end{lemma}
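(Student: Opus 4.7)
The plan is exactly the one outlined in the paragraphs preceding the lemma: decompose the minimizer of (\ref{QP}) as a particular solution plus a null-space correction, then combine the strong convexity lower bound on $\C{Q}$ with $\omega$-weighted Cauchy--Schwarz upper bounds on the linear and cross terms. The key point is that the $\omega$-norm on the left pairs naturally with $\omega$-weighted Cauchy--Schwarz on the right, so no $N$-dependent constant creeps in.

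First I would apply Lemma~\ref{feasible} with $\m{p}=\m{y}_1$ and $\m{q}=\m{y}_2$ to produce vectors $\g{\chi}_1,\ldots,\g{\chi}_{N+1}\in\mathbb{R}^n$ satisfying the linear constraints of (\ref{QP}) with the control set to $\m{0}$, and with the uniform bound $\|\g{\chi}_j\|_\infty \le 5\|\m{y}\|_\infty$. The minimizer of (\ref{QP}) can then be written as $(\g{\chi},\m{0})$ plus a correction $(\m{X},\m{U})$ lying in the null space of the linear constraint map, so it suffices to bound the $\omega$-norm of the correction and then add the contribution of $(\g{\chi},\m{0})$.

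For the null-space correction, the abstract identity (\ref{nullw}) instantiated on (\ref{QP}) gives (\ref{nullx}). On the left, assumption (A2) supplies the lower bound (\ref{lower}), namely $\C{Q}(\m{X},\m{U}) \ge \alpha(\|\m{X}\|_\omega^2 + \|\m{U}\|_\omega^2)$. On the right, each $\omega$-weighted linear term is bounded, as in the prototype (\ref{upper}), by a constant multiple of $\|\m{y}\|_\infty(\|\m{X}\|_\omega^2+\|\m{U}\|_\omega^2)^{1/2}$, where the constant comes from $\sum_i\omega_i = 2$. For the terms containing $\g{\chi}_i$ and $\g{\chi}_N$, I absorb the matrices $\m{Q}_i$ and $\m{T}$ into a constant using continuity of the second derivatives of $H$ and $C$ on the tube around $(\m{x}^*,\m{u}^*)$, and then use $\|\g{\chi}\|_\infty \le 5\|\m{y}\|_\infty$ together with the same $\omega$-weighted Cauchy--Schwarz. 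Collecting everything and cancelling one power of $(\|\m{X}\|_\omega^2+\|\m{U}\|_\omega^2)^{1/2}$ yields $\|\m{X}\|_\omega + \|\m{U}\|_\omega \le c\|\m{y}\|_\infty$ with $c$ depending only on $\alpha$ and the continuous data.

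Finally I add back the particular solution: since $\|\g{\chi}\|_\omega^2 = |\g{\chi}_{N+1}|^2+\sum_i\omega_i|\g{\chi}_i|^2 \le 3\|\g{\chi}\|_\infty^2$ and the control part of the particular solution is zero, one obtains $\|\g{\chi}\|_\omega \le c\|\m{y}\|_\infty$, and the triangle inequality closes the proof. The only delicate step is the null-space bound: every right-hand term in (\ref{nullx}) must be dominated by a constant times $\|\m{y}\|_\infty(\|\m{X}\|_\omega+\|\m{U}\|_\omega)$ with no hidden dependence on $N$, and this is precisely what the $\omega$-weighted Cauchy--Schwarz gives because $\sum_i\omega_i = 2$ is $N$-independent. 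Ordinary discrete $\ell^2$ Cauchy--Schwarz, by contrast, would inject a factor like $\sqrt{N}$ and destroy the estimate.
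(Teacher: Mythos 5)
Your proposal is correct and follows essentially the same route as the paper: the same decomposition into the particular solution $(\g{\chi},\m{0})$ from Lemma~\ref{feasible} plus a null-space correction, the same use of the identity (\ref{nullx}) against the coercivity bound (\ref{lower}), and the same $\omega$-weighted Cauchy--Schwarz estimates as in (\ref{upper}), with the crucial observation that $\sum_i \omega_i = 2$ keeps all constants independent of $N$. The only details you add---absorbing $\m{Q}_i$ and $\m{T}$ into constants via continuity of the second derivatives along the continuous solution, and the explicit bound $\|\g{\chi}\|_\omega^2 \le 3\|\g{\chi}\|_\infty^2$ when adding back the particular solution---simply make explicit steps the paper leaves implicit.
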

\smallskip

\section{$\infty$-norm bounds}
\label{inftybounds}
We now need to convert these $\omega$-norm bounds for $\m{X}$ and
$\m{U}$ into $\infty$-norm bounds and at the same time, obtain an
$\infty$-norm estimate for $\g{\Lambda}$.
By Lemma~\ref{feasible}, the solution to the dynamics in (\ref{QP})
can be expressed
\begin{equation}\label{solvestate}
\bar{\m{X}} = (\m{I} - \bar{\m{D}}^{-1}{\m{A}})^{-1}
\bar{\m{D}}^{-1} \m{BU} + \m{p},
\end{equation}
where $\m{B}$ is the block diagonal matrix with $i$-th diagonal block $\m{B}_i$.
Taking norms and utilizing the bounds
$\|\m{p}\|_\infty \le 4 \|\m{y}_{1}\|_\infty$ and
$\left\|({\bf I}-\bar{\bf D}^{-1}{\bf A})^{-1}\right\|_\infty \leq 2$
from Lemma~\ref{feasible}, we obtain
\begin{equation}\label{h101}
\|\bar{\m{X}}\|_\infty \le
2 \|\bar{\m{D}}^{-1} \m{BU}\|_\infty + 4 \|\m{y}_1\|_\infty .
\end{equation}

We now write
\begin{equation}\label{h103}
\bar{\m{D}}^{-1} \m{BU} = [\m{D}_{1:N}^{-1}\otimes \m{I}_n] \m{BU} =
[(\m{W}^{1/2} \m{D}_{1:N})^{-1} \otimes \m{I}_n] \m{BU}_\omega ,
\end{equation}
where $\m{W}$ is the diagonal matrix with the quadrature weights on the
diagonal and $\m{U}_\omega$ is the vector whose $i$-th element is
$\sqrt{\omega_i} \m{U}_i$.
Note that the $\sqrt{\omega_i}$ factors in (\ref{h103}) cancel each other.
An element of the vector $\bar{\m{D}}^{-1} \m{BU}$ is the
dot product between a row of
$(\m{W}^{1/2} \m{D}_{1:N})^{-1} \otimes \m{I}_n$ and the column vector
$\m{BU}_\omega$.
By (P2) the rows of $(\m{W}^{1/2} \m{D}_{1:N})^{-1} \otimes \m{I}_n$ have
Euclidean length bounded by $\sqrt{2}$.
By the properties of matrix norms induced by vector norms, we have
\[
\|\m{BU}_\omega\|_2 \le \|\m{B}\|_2 \|\m{U}_\omega\|_2 =
\|\m{B}\|_2 \|\m{U}\|_\omega .
\]
It follows that
\begin{equation}\label{h102}
\|\bar{\m{D}}^{-1} \m{BU}\|_\infty \le \sqrt{2} \|\m{B}\|_2 \|\m{U}\|_\omega .
\end{equation}
Combine Lemma~\ref{l2} with (\ref{h101}) and (\ref{h102}) to deduce that
$\|\bar{\m{X}}\|_{\infty} \le c \|\m{y}\|_\infty$,
where $c$ is independent of $N$.
Since $|X_N| \le c\|\m{y}\|_\infty$ by Lemma~\ref{l2}, it follows that
$\|{\m{X}}\|_{\infty} \le c \|\m{y}\|_\infty$,

Next, we use the third and fourth components of the linear system
$\nabla \C{T}^*[\m{X}, \m{U}, \g{\Lambda}] = \m{y}$
to obtain bounds for $\g{\Lambda}$.
These equations can be written
\begin{equation}\label{c1}
\bar{\m{D}}^\dagger \bar{\g{\Lambda}} + \bar{\m{D}}_{N+1}^\dagger
\g{\Lambda}_{N+1} + {\m{A}}\tr \bar{\g{\Lambda}} +
{\m{Q}} \bar{\m{X}} + {\m{S}} {\m{U}} = \m{y}_3
\end{equation}
and
\begin{equation}\label{c2}
\g{\Lambda}_{N+1} - \m{TX}_{N+1} = \m{y}_4 ,
\end{equation}
where $ \bar{\g{\Lambda}}$ is obtained by vertically stacking
$\g{\Lambda}_1$ through $\g{\Lambda}_N$, ${\m{Q}}$ and
${\m{S}}$ are block diagonal matrices with $i$-th diagonal blocks
$\m{Q}_i$ and $\m{S}_i$ respectively,
$\bar{\m{D}}^\dagger = {\bf D}_{1:N}^\dagger\otimes {\bf I}_n$,
and $\bar{\m{D}}_{N+1}^\dagger = \m{D}_{N+1}^\dagger \otimes \m{I}_n$,
where $\m{D}_{N+1}^\dagger$ is the $(N+1)$-st column of $\m{D}^\dagger$.

We show in Proposition~\ref{dagger} of the Appendix that
$\m{D}_{1:N} = -\m{J} \m{D}_{1:N}^\dagger\m{J}$, where $\m{J}$ is the
exchange matrix with ones on its counterdiagonal and zeros elsewhere.
It follows that
$\m{D}_{1:N}^{-1} = -\m{J} (\m{D}_{1:N}^\dagger)^{-1}\m{J}$.
Consequently, the elements in 
$\m{D}_{1:N}^{-1}$ are the negative of the elements in
$(\m{D}_{1:N}^\dagger)^{-1}$, but rearranged.
As a result, $(\m{D}_{1:N}^\dagger)^{-1}$ also possesses
properties (P1) and (P2), and the analysis of the discrete costate
closely parallels the analysis of the state.
The main difference is that the costate equation contains the
additional $\g{\Lambda}_{N+1}$ term along with the additional
equation (\ref{c2}).
By (\ref{c2}) and the previously established bound
$\|{\m{X}}\|_{\infty} \le c \|\m{y}\|_\infty$, it follows that
\begin{equation}\label{c3}
\|\g{\Lambda}_{N+1}\|_{\infty} \le c \|\m{y}\|_\infty,
\end{equation}
where $c$ is independent of $N$.
Since $\m{D}^\dagger \m{1} = \m{0}$, we deduce that
$(\m{D}_{1:N}^\dagger)^{-1} \m{D}_{N+1} = -\m{1}$.
It follows that
\[
(\bar{\m{D}}^\dagger)^{-1}
\bar{\m{D}}_{N+1}^\dagger =
[({\bf D}_{1:N}^\dagger)^{-1}\otimes {\bf I}_n]
[\m{D}_{N+1}^\dagger \otimes \m{I}_n] = -\m{1}\otimes \m{I}_n .
\]
Exploiting this identity, the analogue of (\ref{solvestate}) is
\[
\bar{\g{\Lambda}} =
(\m{I} + (\bar{\m{D}}^\dagger)^{-1}{\m{A}}\tr)^{-1}
[(\m{1}\otimes \m{I}_n)\g{\Lambda}_{N+1} + (\bar{\m{D}}^\dagger)^{-1}
(\m{y}_3 - {\m{Q}} \bar{\m{X}} - {\m{S}} {\m{U}})] .
\]
Hence, we have
\[
\|\bar{\g{\Lambda}}\|_\infty \le
2\|(\m{1}\otimes \m{I}_n)\g{\Lambda}_{N+1} + (\bar{\m{D}}^\dagger)^{-1}
(\m{y}_3 - {\m{Q}} \bar{\m{X}} - {\m{S}} {\m{U}})\|_\infty .
\]
Moreover,
$\|(\m{1}\otimes \m{I}_n)\g{\Lambda}_{N+1} \|_\infty \le c\|\m{y}\|_\infty$
by (\ref{c3}) and
$\|(\bar{\m{D}}^\dagger)^{-1} \m{y}_3\|_\infty \le 2 \|\m{y}_3\|_\infty$.
The terms $\|(\bar{\m{D}}^\dagger)^{-1} {\m{Q}} \bar{\m{X}}\|_\infty$
and $\|(\bar{\m{D}}^\dagger)^{-1} {\m{S}} {\m{U}})\|_\infty$
are handled exactly as the term $\|\bar{\m{D}}^{-1} \m{BU}\|_\infty$ was handled
in the state equation (\ref{solvestate}).
We again conclude that 
$\|{\g{\Lambda}}\|_\infty \le c \|\m{y}\|_\infty$ where $c$ is independent
of $N$.

Finally, let us examine the fifth component of the linear system
$\nabla \C{T}^*[\m{X}, \m{U}, \g{\Lambda}] = \m{y}$.
These equations can be written
\[
\m{S}_i\tr\m{X}_i + \m{R}_i \m{U}_i + \m{B}_i\tr \g{\Lambda}_i = \m{y}_{5i},
\quad 1\leq i\leq N .
\]
By (A2) the smallest eigenvalue of $\m{R}_i$ is greater than $\alpha > 0$.
Consequently, the bounds
$\|\m{X}\|_\infty \le c \|\m{y}\|_\infty$ and
$\|\g{\Lambda}\|_\infty \le c \|\m{y}\|_\infty$ imply the existence
of a constant $c$, independent of $N$, such that
$\|\m{U}\|_\infty \le c \|\m{y}\|_\infty$.
In summary, we have the following result:

\begin{lemma}\label{inf-bounds}
If {\rm (A2)--(A3)} and {\rm (P1)--(P2)} hold,
then there exists a constant $c$,
independent of $N$, such that the solution
of $\nabla \C{T}^*[\m{X}, \m{U}, \g{\Lambda}] = \m{y}$ satisfies
\[
\|\m{X}\|_\infty + \|\m{U}\|_\infty + \|\g{\Lambda}\|_\infty \le
c \|\m{y}\|_\infty.
\]
\end{lemma}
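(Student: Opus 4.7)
The strategy is to exploit the block structure of $\nabla \C{T}^*$ and solve for $\m{X}$, $\g{\Lambda}$, and $\m{U}$ in sequence, starting from the $\omega$-norm bounds for the state and control delivered by Lemma~\ref{l2} and upgrading them to $\infty$-norm bounds. The crucial ingredient for that upgrade is property (P2), which converts the $L^2$-type information on $\m{U}$ into an $L^\infty$ bound on $\bar{\m{D}}^{-1}\m{B}\m{U}$.

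First I would treat the state. The first two components of the system $\nabla \C{T}^*[\m{X},\m{U},\g{\Lambda}] = \m{y}$ reproduce the linear system (\ref{h99})--(\ref{h100}) studied in Lemma~\ref{feasible}, so by the Neumann-series argument used there, $\bar{\m{X}} = (\m{I} - \bar{\m{D}}^{-1}\m{A})^{-1}\bar{\m{D}}^{-1}\m{B}\m{U} + \m{p}$ with $\|\m{p}\|_\infty \le 4\|\m{y}_1\|_\infty$ and $\|(\m{I}-\bar{\m{D}}^{-1}\m{A})^{-1}\|_\infty \le 2$. I would then factor
\[
\bar{\m{D}}^{-1}\m{B}\m{U} = [(\m{W}^{1/2}\m{D}_{1:N})^{-1}\otimes \m{I}_n]\,\m{B}\m{U}_\omega,
\]
where $\m{U}_\omega$ has $i$-th block $\sqrt{\omega_i}\m{U}_i$, so that $\|\m{U}_\omega\|_2 = \|\m{U}\|_\omega$. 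Each component of this product is an inner product of a row of the Kronecker factor, whose Euclidean norm is at most $\sqrt{2}$ by (P2), with the column vector $\m{B}\m{U}_\omega$, so Cauchy--Schwarz bounds each component by $\sqrt{2}\|\m{B}\|_2\|\m{U}\|_\omega$. Lemma~\ref{l2} then yields $\|\bar{\m{X}}\|_\infty \le c\|\m{y}\|_\infty$, and combining with the already-controlled $|\m{X}_{N+1}|$ gives $\|\m{X}\|_\infty \le c\|\m{y}\|_\infty$.

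Next I would handle the costate. Equation (\ref{c2}) immediately gives $\|\g{\Lambda}_{N+1}\|_\infty \le c\|\m{y}\|_\infty$ using the $\m{X}$ bound just obtained. For the interior values, I would invoke Proposition~\ref{dagger} of the appendix to transfer (P1)--(P2) from $\m{D}_{1:N}$ to $\m{D}_{1:N}^\dagger$ via the exchange-matrix identity $\m{D}_{1:N}^{-1} = -\m{J}(\m{D}_{1:N}^\dagger)^{-1}\m{J}$. The identity $\m{D}^\dagger\m{1} = \m{0}$ gives $(\m{D}_{1:N}^\dagger)^{-1}\m{D}_{N+1}^\dagger = -\m{1}$, so solving (\ref{c1}) produces
\[
\bar{\g{\Lambda}} = (\m{I} + (\bar{\m{D}}^\dagger)^{-1}\m{A}\tr)^{-1}\bigl[(\m{1}\otimes \m{I}_n)\g{\Lambda}_{N+1} + (\bar{\m{D}}^\dagger)^{-1}(\m{y}_3 - \m{Q}\bar{\m{X}} - \m{S}\m{U})\bigr].
\]
The $\g{\Lambda}_{N+1}$ and $\m{y}_3$ contributions are controlled by (P1) and the bound on $\g{\Lambda}_{N+1}$, while $(\bar{\m{D}}^\dagger)^{-1}\m{Q}\bar{\m{X}}$ and $(\bar{\m{D}}^\dagger)^{-1}\m{S}\m{U}$ are treated exactly as $\bar{\m{D}}^{-1}\m{B}\m{U}$ was above, the $\omega$-norm bounds on $\m{X}$ and $\m{U}$ from Lemma~\ref{l2} feeding into the same (P2)-plus-Cauchy--Schwarz mechanism. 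This gives $\|\g{\Lambda}\|_\infty \le c\|\m{y}\|_\infty$.

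Finally, the fifth component reads $\m{S}_i\tr\m{X}_i + \m{R}_i\m{U}_i + \m{B}_i\tr\g{\Lambda}_i = \m{y}_{5i}$, and (A2) guarantees $\m{R}_i$ is symmetric positive definite with smallest eigenvalue at least $\alpha > 0$ independent of $N$. Hence $\m{R}_i^{-1}$ is uniformly bounded, and the $\infty$-norm bounds on $\m{X}$ and $\g{\Lambda}$ already derived immediately yield $\|\m{U}\|_\infty \le c\|\m{y}\|_\infty$. The main obstacle is the $\omega$-to-$\infty$ conversion in the state step: it is the only place in the chain where one needs something about $\m{D}_{1:N}$ beyond plain invertibility, and it is exactly property (P2) that makes the constant $N$-uniform; everything else is linear algebra supported by the coercivity (A2) and the perturbation bound (A3).
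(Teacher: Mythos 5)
Your proposal is correct and follows essentially the same route as the paper: the resolvent representation of $\bar{\m{X}}$ from Lemma~\ref{feasible}, the $\m{W}^{1/2}$ factorization with (P2) and Cauchy--Schwarz to convert the $\omega$-norm bounds of Lemma~\ref{l2} into $\infty$-norm bounds, the transfer of (P1)--(P2) to $\m{D}_{1:N}^\dagger$ via Proposition~\ref{dagger} together with the identity $(\m{D}_{1:N}^\dagger)^{-1}\m{D}_{N+1}^\dagger = -\m{1}$ for the costate, and finally the uniform invertibility of $\m{R}_i$ from (A2) to recover $\|\m{U}\|_\infty$. No substantive differences from the paper's argument.
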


Let us now prove Theorem~\ref{maintheorem} using Proposition~\ref{prop}.
By Lemma~\ref{inf-bounds},
$\mu = \|\nabla \C{T}(\m{X}^*, \m{U}^*, \g{\Lambda}^*)^{-1}\|_\infty$
is bounded uniformly in $N$.
Choose $\varepsilon$ small enough that $\varepsilon\mu < 1$.
When we compute the difference
$\nabla \C{T}(\m{X}, \m{U}, \g{\Lambda}) -
\nabla \C{T}(\m{X}^*, \m{U}^*, \g{\Lambda}^*)$ for
$(\m{X}, \m{U}, \g{\Lambda})$ near $(\m{X}^*, \m{U}^*, \g{\Lambda}^*)$
in the $\infty$-norm,
the $\m{D}$ and $\m{D}^\dagger$ constant terms cancel, and we are
left with terms involving the difference of
derivatives of $\m{f}$ or $C$ up to second order at nearby points.
By assumption, these second derivative are uniformly continuous on
the closure of $\Omega$ and on a ball around $\m{x}^*(1)$.
Hence, for $r$ sufficiently small, we have
\[
\
\|\nabla\mathcal{T}(\m{X}, \m{U}, \g{\Lambda})-
\nabla\mathcal{T}(\m{X}^*, \m{U}^*, \g{\Lambda}^*)\|_\infty
\leq \varepsilon
\]
whenever
\begin{equation}\label{r-bound}
\max\{\|\m{X} -\m{X}^*\|_\infty, \|\m{U} -\m{U}^*\|_\infty,
\|\g{\Lambda} - \g{\Lambda}^*\|_\infty\} \le r.
\end{equation}
By Lemma~\ref{residuallemma}, it follows that
$\left\|\mathcal{T}\left(\m{X}^*, \m{U}^*, \g{\Lambda}^*\right)\right\|
\leq(1-\mu\varepsilon)r/\mu$
for all $N$ sufficiently large.
Hence, by Proposition~\ref{prop}, there exists a solution to
$\C{T}(\m{X},\m{U}, \g{\Lambda}) = \m{0}$ satisfying (\ref{r-bound}).
Moreover, by (\ref{abs}) and (\ref{delta}), the estimate (\ref{maineq}) holds.
To complete the proof, we need to show that
$(\m{X}, \m{U})$ is a local minimizer for (\ref{nlp}).
After replacing the KKT multipliers by the transformed quantities given
by (\ref{eq12}), the Hessian of the Lagrangian is the following block
diagonal matrix:
\[
\mbox{diag} \left\{
\omega_1 \nabla_{(x,u)}^2 H(\m{X}_1, \m{U}_1, \g{\Lambda}_1),\;\; \ldots \;\;,
\;\; \omega_N \nabla_{(x,u)}^2 H(\m{X}_N, \m{U}_N, \g{\Lambda}_N), \;\;
\nabla^2 C(\m{X}_{N+1}) \right\}
\]
where $H$ is the Hamiltonian.
In computing the Hessian, we assume that the $\m{X}$ and $\m{U}$
variables are arranged in the following order:
$\m{X}_1$, $\m{U}_1$, $\m{X}_2$, $\m{U}_2$, $\ldots$, $\m{X}_N$, $\m{U}_N$,
$\m{X}_{N+1}$.
By (A2) the Hessian is positive definite when evaluated at
$(\m{X}^*, \m{U}^*, \g{\Lambda}^*)$.
By continuity of the second derivative of $C$ and $\m{f}$ and by the
convergence result (\ref{maineq}), we conclude that the Hessian of the
Lagrangian, evaluated at the solution of
$\C{T}(\m{X}, \m{U}, \g{\Lambda}) = \m{0}$ satisfying (\ref{r-bound}),
is positive definite for $N$ sufficiently large.
Hence, by the second-order sufficient optimality condition
\cite[Thm. 12.6]{NocedalWright2006}, $(\m{X},\m{U})$ is a strict
local minimizer of (\ref{nlp}).
This completes the proof of Theorem~\ref{maintheorem}.

\section{Numerical illustration}
\label{numerical}
Although the assumptions (A1)--(A3) are sufficient for exponential
convergence, the following example indicates that these assumptions
are conservative.
Let us consider the unconstrained control problem
\begin{equation}\label{exprob}
\min \left\{ -x(2) :
\dot{x}(t) = \textstyle{\frac{5}{2}}(-x(t) + x(t)u(t) - u(t)^2),
\; x(0) = 1 \right\} .
\end{equation}
The optimal solution and associated costate are
\begin{eqnarray*}
x^*(t) &=& 4/a(t), \quad a(t) = 1 + 3 \exp (2.5t), \\
u^*(t) &=& x^*(t)/2, \\
\lambda^* (t) &=& -a^2(t) \exp (-2.5t)/[\exp (-5) + 9 \exp(5) + 6].
\end{eqnarray*}
Figure~\ref{example} plots the logarithm of the sup-norm error in
the state, control, and costate as a function of the number of collocation
points.
Since these plots are nearly linear, the error behaves like
$c 10^{-\alpha N}$ where $\alpha \approx 0.6$ for either the state
or the control and $\alpha \approx 0.8$ for the costate.
In Theorem~\ref{maintheorem}, the dependence of the error on $N$
is somewhat complex due to the connection between $m$ and $N$.
As we increase $N$, we can also increase $m$ when the solution is
infinitely differentiable, however, the norm of the derivatives
also enters into the error bound as in (\ref{h2}).
Nonetheless, in cases where the solution derivatives can be bounded by
$c^m$ for some constant $c$, it is possible to deduce an exponential
decay rate for the error as observed in \cite[Sect. 2]{GargHagerRao10a}.
Note that the example problem (\ref{exprob}) does not satisfy (A2)
since $\nabla^2 C = \m{0}$, which is not positive definite.
Nonetheless, the pointwise error decays exponentially fast.
\begin{figure}
\begin{center}
\includegraphics[scale=.5]{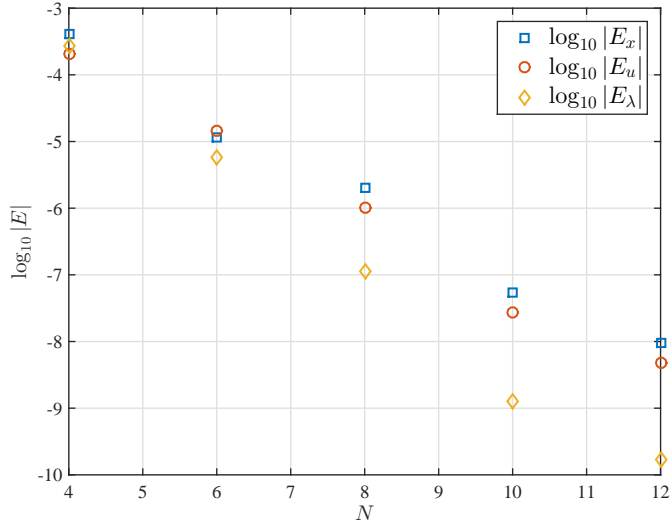}
\caption{The base 10 logarithm of the error in the sup-norm as a function of
the number of collocation points.
\label{example}}
\end{center}
\end{figure}

\section{Conclusions}
A Gauss collocation scheme is analyzed for an unconstrained control problem.
For a smooth solution whose Hamiltonian
satisfies a strong convexity assumption, we show that the discrete
problem has a local minimizer in a neighborhood of the continuous solution,
and as the number of collocation points increases, the distance in the
sup-norm between the discrete solution and the continuous solution is
$O(N^{2-\eta})$ when the continuous solution has $\eta+1$
continuous derivatives, $\eta \ge 3$,
and the number of collocation points $N$ is sufficiently large.
A numerical example is given which exhibits an exponential convergence rate.
\section{Appendix}
\label{appendix}
In (\ref{eq15}) we define a new matrix $\m{D}^\dagger$ in terms of
the differentiation matrix $\m{D}$.
The following proposition shows that the elements of
$\m{D}^\dagger$ are the negative and a rearrangement of the
elements of $\m{D}$.
\begin{proposition} \label{dagger}
The entries of the matrices $\m{D}$ and $\m{D}^\dagger$ satisfy
\[
D_{ij} = -D_{N+1-i, N+1-j}^\dagger,
\quad 1 \le i \le N, \quad 1 \le j \le N.
\]
In other words, $\m{D}_{1:N} = -\m{J} \m{D}_{1:N}^\dagger \m{J}$
where $\m{J}$ is the exchange matrix with ones on its counterdiagonal
and zeros elsewhere.
\end{proposition}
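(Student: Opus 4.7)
The plan is to exploit the symmetry of the Gauss quadrature abscissas about the origin, combined with the observation that $\m{D}^\dagger$ is itself a Lagrange-type differentiation matrix. Recall that the interior nodes $\tau_1, \ldots, \tau_N$ are symmetric about $0$, and by convention $\tau_0 = -1$, $\tau_{N+1} = +1$, so altogether $\tau_k = -\tau_{N+1-k}$ for every $0 \le k \le N+1$. Moreover, the Gauss weights satisfy $\omega_k = \omega_{N+1-k}$.

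The first step is to recall the dual characterization of $\m{D}^\dagger$: by the result of \cite{GargHagerRao10a} cited in Section~\ref{residual} (in the paragraph containing the identity $\sum_{j=1}^{N+1} D_{ij}^\dagger \g{\Lambda}_j^* = \dot{\g{\lambda}}^I(\tau_i)$), the matrix $\m{D}^\dagger$ defined through (\ref{eq15})--(\ref{eq151}) acts as the exact differentiation operator at $\tau_1, \ldots, \tau_N$ on polynomials of degree $\le N$ that are interpolated at the nodes $\tau_1, \ldots, \tau_{N+1}$. Choosing the interpolated polynomial to be a Lagrange basis function then gives $D_{ij}^\dagger = \dot{L}_j^\dagger(\tau_i)$, where $L_j^\dagger(\tau) := \prod_{1\le i \le N+1,\, i \neq j} (\tau-\tau_i)/(\tau_j-\tau_i)$.

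The heart of the proof is a change-of-variable calculation that compares $L_j(-\tau)$ with $L_{N+1-j}^\dagger(\tau)$. Substituting $i' = N+1-i$ in the product defining $L_j$, and using both $\tau_i = -\tau_{i'}$ and $\tau_j = -\tau_{N+1-j}$, every factor $(-\tau - \tau_i)/(\tau_j - \tau_i)$ becomes $-(\tau - \tau_{i'})/{-}(\tau_{N+1-j} - \tau_{i'})$; the two sign changes cancel, and as $i$ ranges over $\{0,\dots,N\}\setminus\{j\}$, $i'$ ranges over $\{1,\dots,N+1\}\setminus\{N+1-j\}$, giving exactly $L_j(-\tau) = L_{N+1-j}^\dagger(\tau)$ for $0 \le j \le N$. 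Differentiating in $\tau$ yields $-\dot L_j(-\tau) = \dot L_{N+1-j}^\dagger(\tau)$; evaluating at $\tau = \tau_i$ with $-\tau_i = \tau_{N+1-i}$ then produces $-D_{N+1-i,\,j} = D_{i,\,N+1-j}^\dagger$, and relabeling $i \mapsto N+1-i$, $j \mapsto N+1-j$ gives the stated identity $D_{ij} = -D_{N+1-i,\,N+1-j}^\dagger$ for $1 \le i,j \le N$, which is precisely $\m{D}_{1:N} = -\m{J}\m{D}_{1:N}^\dagger\m{J}$.

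The main obstacle is the bookkeeping in the change of variables: one has to verify that the index set $\{0,\dots,N\}\setminus\{j\}$ maps bijectively under $i \mapsto N+1-i$ onto $\{1,\dots,N+1\}\setminus\{N+1-j\}$ (this is where the asymmetric inclusion of the endpoints $\tau_0$ in the $D$-nodes and $\tau_{N+1}$ in the $D^\dagger$-nodes fits together), and to track the two sign flips (one from $-\tau - \tau_i = -(\tau - \tau_{i'})$, one from $\tau_j - \tau_i = -(\tau_{N+1-j} - \tau_{i'})$) that cancel. Once this algebra is done cleanly, the result is immediate; no analytic input beyond the symmetry of the Legendre roots and the Lagrange interpretation of $\m{D}^\dagger$ is needed.
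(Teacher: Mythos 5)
Your proof is correct and takes essentially the same route as the paper's: both identify $\m{D}^\dagger$ as the Lagrange differentiation matrix on the nodes $\tau_1,\dots,\tau_{N+1}$ (via Thm.~1 of Garg--Hager--Rao), reduce the claim to the reflection identity $L_j(-\tau) = L_{N+1-j}^\dagger(\tau)$, and then differentiate and evaluate at the nodes, using the symmetry $\tau_k = -\tau_{N+1-k}$. The only immaterial difference is that you verify the reflection identity by a change of variables in the explicit product formula, whereas the paper checks agreement of the two polynomials at $N+1$ distinct points and invokes uniqueness of interpolation.
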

\smallskip
\begin{proof}
By (\ref{Ddef}) the elements of $\m{D}$ can be expressed in terms
of the derivatives of a set of Lagrange basis functions evaluated at
the collocation points:
\[
D_{ij} = \dot{L}_j(\tau_i) \quad \mbox{where } L_j \in \C{P}_N, \quad
L_j (\tau_k) = \left\{ \begin{array}{ll}
1 & \mbox{if } k = j, \\
0 & \mbox{if } 0 \le k \le N, \; k \ne j.
\end{array}
\right.
\]
In (\ref{Ddef}) we give an explicit formula for the Lagrange basis functions,
while here we express the basis function in terms of polynomials
$L_j$ that equal one at $\tau_j$ and vanish at $\tau_k$ where
$0 \le k \le N$, $k \ne j$.
These $N +1$ conditions uniquely define $L_j \in \C{P}_N$.
Similarly, by \cite[Thm. 1]{GargHagerRao10a},
the entries of $\m{D}_{1:N}^\dagger$ are given by
\[
D_{ij}^\dagger = \dot{M}_j(\tau_i) \quad \mbox{where } M_j \in \C{P}_N, \quad
M_j (\tau_k) = \left\{ \begin{array}{ll}
1 & \mbox{if } k = j, \\
0 & \mbox{if } 1 \le k \le N+1, \; k \ne j .
\end{array}
\right.
\]
Observe that $M_{N+1-j}(t) = L_j (-t)$ due the symmetry of the
quadrature points around $t = 0$:
\smallskip
\begin{itemize}
\item[(a)]
Since $-\tau_{N+1-j} = \tau_j$, we have
$L_j (-\tau_{N+1-j}) = L_j (\tau_j) = 1$.
\item[(b)]
Since $\tau_{N+1} = 1$ and $\tau_0 = -1$, we have
$L_j (-\tau_{N+1}) = L_j (\tau_0) = 0$.
\item[(c)]
Since $-\tau_i = \tau_{N+1-i}$, we have
$L_j (-\tau_{i}) = L_j (\tau_{N+1-i}) = 0$ if $i \ne N+1-j$.
\end{itemize}
Since $M_{N+1-j}(t)$ is equal to  $L_j (-t)$ at $N+1$ distinct points,
the two polynomials are equal everywhere.
Replacing $M_{N+1-j}(t)$ by $L_j (-t)$,
we have
\[
D_{N+1-i, N+1-j}^\dagger =
-\dot{L}_j (-\tau_{N+1-i}) = - \dot{L}_j (\tau_i) =-D_{ij}.
\]
\end{proof}
\smallskip

Tables \ref{P1} and \ref{P2} illustrate properties (P1) and (P2)
for the differentiation matrix $\m{D}$.
In Table~\ref{P1} we observe that $\|\m{D}_{1:N}^{-1}\|_\infty$
monotonically approaches the upper limit 2.
More precisely, it is found that $\|\m{D}_{1:N}^{-1}\|_\infty = 1 + \tau_N$,
where the final collocation point $\tau_N$ approaches one as $N$ tends
to infinity.
In Table~\ref{P2} we show the maximum 2-norm of the rows of
$[\m{W}^{1/2} \m{D}_{1:N}]^{-1}$.
It is found that the maximum is attained by the last row of
$[\m{W}^{1/2} \m{D}_{1:N}]^{-1}$, and the maximum monotonically approaches
$\sqrt{2}$.
\begin{table}[ht]
\centering
\begin{tabular}{c|c|c|c|c|c|c}
\hline\hline
$N$ & 25 & 50 & 75 & 100 & 125 & 150 \\
\hline
norm & 1.995557 & 1.998866 & 1.999494 & 1.999714 & 1.999816 & 1.999872  \\
\hline\hline
$N$ & 175 & 200 & 225 & 250 & 275 & 300 \\
\hline
norm & 1.999906 & 1.999928 & 1.999943 & 1.999954 & 1.999962 & 1.999968\\
\hline
\end{tabular}
\vspace*{.1in}
\caption{$\|{\bf D}_{1:N}^{-1}\|_\infty$
\label{P1}}
\end{table}
\begin{table}[ht]
\centering
\begin{tabular}{c|c|c|c|c|c|c}
\hline\hline
$N$ & 25 & 50 & 75 & 100 & 125 & 150 \\
\hline
norm & 1.412201 & 1.413703 & 1.413985 & 1.414085 & 1.414131 & 1.414156 \\
\hline\hline
$N$ & 175 & 200 & 225 & 250 & 275 & 300 \\
\hline
norm & 1.414171 & 1.414181 & 1.414188 & 1.414193 & 1.414196 & 1.414199 \\
\hline
\end{tabular}
\vspace*{.1in}
\caption{Maximum Euclidean norm for the rows of
$[\m{W}^{1/2}{\bf D}_{1:N}]^{-1}$
\label{P2}}
\end{table}
%


\newpage
\bibliographystyle{siam}
\bibliography{library}
\end{document}